\documentclass[a4paper,10pt]{amsart}
\usepackage{amsmath,amsthm,amssymb,tikz,calc,graphicx}
\usepackage[latin1]{inputenc}
\usepackage[T1]{fontenc}
\usepackage[all]{xy}
\usepackage{mathrsfs,pifont}
\usepackage{booktabs}
\usepackage{array}
\usepackage{longtable}
\usepackage{placeins}
\usepackage{enumitem,hyperref,geometry}

\DeclareMathOperator{\Ei}{Ei}
\DeclareMathOperator{\ERi}{ERi}
\newcommand{\C}{\mathbb C}

\newtheorem*{thm*}{Main Theorem}

\newtheorem{lemma}[equation]{Lemma}
\newtheorem{proposition}[equation]{Proposition}

\theoremstyle{definition}

\newtheorem{defn-lem}[equation]{Definition-Lemma}

\newtheorem*{rem*}{Remark}

\numberwithin{equation}{section}

\usepackage{bm}
\usepackage{upgreek}
\makeatletter
\newcommand{\bfgreek}[1]{\bm{\@nameuse{up#1}}}

\def\R{\mathbb R}
\def\N{\mathbb N}

\def\C{\mathbb C}

\def\<{\langle}
\def\>{\rangle}

\makeatletter
\DeclareRobustCommand\smallop[2][1]{%
	\mathop{\vphantom{\oplus}\mathpalette\smallop@{{#1}{#2}}}\slimits@
}
\newcommand{\smallop@}[2]{\smallop@@#1#2}
\newcommand{\smallop@@}[3]{%
	\vcenter{%
		\sbox\z@{$#1\oplus$}%
		\hbox{\resizebox{\ifx#1\displaystyle#2\fi\dimexpr\ht\z@+\dp\z@}{!}{$\m@th#3$}}%
	}%
}
\makeatother

\makeatletter
\DeclareRobustCommand\bigop[2][1]{%
	\mathop{\vphantom{\bigoplus}\mathpalette\bigop@{{#1}{#2}}}\slimits@
}
\newcommand{\bigop@}[2]{\bigop@@#1#2}
\newcommand{\bigop@@}[3]{%
	\vcenter{%
		\sbox\z@{$#1\bigoplus$}%
		\hbox{\resizebox{\ifx#1\displaystyle#2\fi\dimexpr\ht\z@+\dp\z@}{!}{$\m@th#3$}}%
	}%
}
\makeatother

\begin{document}

\title[On divergence related to Riemann--von Mangoldt's explicit formula for $\pi(x)$]{On divergence related to Riemann--von Mangoldt's explicit formula of the prime-counting function}
\author{Harald Grobner}
\thanks{The author has been supported by the research projects PAT-4628923 and PAT-2584625 of the Austrian Science Fund (FWF), the EU NextGeneration project (IIP\_UNIPU\_010159) and the Croatian Science Foundation project (HRZZ-IP-2022-10-4615).}
\address{Harald Grobner, Faculty of Mathematics, University of Vienna, Oskar-Morgenstern-Platz 1, A-1090 Vienna, Austria}
\email{harald.grobner@univie.ac.at}

\keywords{Prime counting function, Riemann, von Mangoldt, explicit formula, nontrivial zero}
\subjclass[2010]{11M06, 11M26; 11A41}
\date{\today}

\begin{abstract}
An explicit formula for the prime-counting function $\pi(x)$, usually attributed to Riemann and von Mangoldt, is prominently stated as the equation $\pi(x)=R(x)-\sum_\rho R(x^\rho)$, where the sum runs over all zeros $\rho$ of the Riemann $\zeta$-function, the non-trivial ones being ordered by increasing absolute value of their imaginary parts and counted with multiplicity. This particularly entails the claim that the partial sums over the non-trivial zeros, $\Sigma R_T(x):=\sum_{0<|\Im m(\rho)|\le T} R(x^\rho)$ converge as ${T\to\infty}$. Writing $\Theta:=\sup\{\Re e(\rho):\ \zeta(\rho)=0,\ 0<\Re e(\rho)<1\},$ for what has recently been called ``Riemann's constant'', we prove that, for every fixed $x>1$ and every $\theta<\Theta$, the sums $\Sigma R_T(x)$ are not $O(T^\theta)$.  As a consequence, $\limsup_{{T\to\infty}}|\Sigma R_T(x)|=\infty$ and $\sum_\rho R(x^\rho)$ diverges. We conclude the paper by showing that an adapted, but simpler strategy also gives the divergence of the contribution of the trivial zeros to $\sum_\rho R(x^\rho)$.
\end{abstract}
\maketitle

\vskip 10pt

\setcounter{tocdepth}{2}
\tableofcontents

\section*{Introduction}
\subsection*{Context} Consider the exponential integral function $\Ei(s)$ for $s\in\C^*$ and its sibling $\ERi(s):=\sum_{n\geq 1} \frac{\mu(n)}{n} \Ei(\frac{s}{n})$, which we continuously complete to an entire function by $\ERi(0):=1$. If $x>0$ is real and $\rho$ denotes any zero of the Riemann $\zeta$-function, one usually abbreviates $R(x^\rho):=\ERi(\rho\log(x))$. In his famous note, \cite{Zagier1977}, p.\ 14, Zagier writes Riemann's prime-counting function (at its points of continuity) in the form
\begin{equation*}
\pi(x)=R(x)-\sum_{\rho}R(x^\rho).
\end{equation*}
where the sum is taken over all zeros $\rho$ of the Riemann $\zeta$-function. As this sum is not absolutely convergent, the usual precise meaning given to it is to sum the non-trivial zeros, which all lie discrete and with finite multiplicity $m_\rho$ in the closed critical strip $0\leq\Re e(z)\leq1$, ordered pairwise by increasing absolute value of their imaginary parts and counted with their multiplicity and the trivial zeros ordered along $-2,-4,-6,-8,...$, i.e., 
\begin{equation}\label{eq:Rdef2}
\sum_{\rho}R(x^\rho):= \lim_{T\to\infty}\sum_{\substack{\zeta(\rho)=0\\ 0<|\Im m(\rho)|\le T}} m_\rho R(x^\rho) + \sum_{k\geq 1} R(x^{-2k}).
\end{equation}
Including the points of discontinuity of $\pi(x)$, the statement can be formulated uniformly for all $x>1$ by considering the averaged prime-counting function and by claiming that
\begin{equation}\label{eq:1}
\pi_0(x)= R(x) - \sum_{\rho} \ERi(\rho \log(x)),
\end{equation}
summation $\sum_{\rho}$ being as defined above. The status of formula \eqref{eq:1} has attracted caution in the literature, however. While it is certainly true that for $x>1$
\begin{equation}\label{eq:1'}
\pi_0(x)= R(x) - \sum_{n\geq 1} \sum_{\rho} \frac{\mu(n)}{n} \Ei\left(\frac{\rho\log(x)}{n}\right),
\end{equation}
an identity which one may call the {\it Riemann--von Mangoldt explicit formula for $\pi_0(x)$}, the claim of \eqref{eq:1}, namely -- after having reinserted into the definition of $\ERi(s)$ --  that also
$$
\pi_0(x)= R(x) - \sum_{\rho} \sum_{n\geq 1} \frac{\mu(n)}{n} \Ei\left(\frac{\rho\log(x)}{n}\right),
$$
holds, i.e., that the two limits underlying the two sums may be interchanged, has remained unproven. However, this amounts to a non-trivial and in fact delicate question, since the conditional convergence of the two series involved on the right hand side of \eqref{eq:1'} would make such a proof non-trivial, cf.\ \cite{BorweinBradleyCrandall2000}, p.\ 249. For a more systematic discussion of this issue, we refer in particular to the recent book of Elliott, \cite{Elliott2025}, especially \S 5.1 and Rem.\ 5.1.2 therein, which includes also a review of the often claimed identity
\begin{equation}\label{eq:trivRs}
\sum_{k\geq 1} R(x^{-2k})=\frac{1}{\log(x)} - \frac{1}{\pi} \arctan{\frac{\pi}{\log(x)}} 
\end{equation}
for the second summand in  \eqref{eq:Rdef2}.

\subsection*{Results} In this note we show that the convergence of the series over the non-trivial zeros of the Riemann $\zeta$-function, showing up as the first summand in \eqref{eq:Rdef2}, in fact does not hold. More precisely, for $x>1$ and $T>0$ put
$$
\Sigma R_T(x):= \sum_{\substack{\zeta(\rho)=0\\0<|\Im m(\rho)|\le T}} m_\rho\ERi(\rho\log x).
$$
In other words, $\Sigma R_T(x)$ is the (finite) sum of the values $\ERi(\rho\log x)$, summation being over all non-trivial zeros $\rho$ of $\zeta(s)$, whose imaginary part is bounded above in absolute value by $T$, and counted with their (finite) multiplicity $m_\rho$. Now set 
$$\Theta:=\sup\{\Re e(\rho):\ \zeta(\rho)=0,\ 0<\Re e(\rho)<1\},$$
which was called ``Riemann's constant'' in  \cite{Elliott2025}. As is well-known, one has $1/2\leq\Theta\leq 1$, the Riemann hypothesis being equivalent to $\Theta=1/2$. The following is our main result.

\begin{thm*}\label{thm:main}
For every fixed $x>1$ and every $\theta$ with $\theta<\Theta$, the partial sums $\Sigma R_T(x)$ are not $O(T^\theta)$ as $T\to\infty$. In particular, for every $x>1$, $\limsup_{T\to\infty}|\Sigma R_T(x)|=\infty$ and $\Sigma R_T(x)$ diverges. 
\end{thm*}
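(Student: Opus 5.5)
The plan is to isolate a single dominant term in $\ERi(\rho\log x)$ and show that summing it over the zeros up to height $T$ forces growth of size $T^{\theta}$ for every $\theta<\Theta$.

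\emph{Asymptotics of $\ERi$.} For $s=\rho\log x$ with $\rho=\beta+i\gamma$, $|\gamma|\to\infty$, we have $|s|\to\infty$ while $\Re e(s/n)=\beta\log x/n$ stays bounded. We therefore use $\Ei(w)=e^{w}/w\,(1+O(1/|w|))$ for $|\Im m(w)|$ large. For $n\le N(\gamma)$ with, say, $N\approx |\gamma|\log x/C$, the term
\[
\frac{\mu(n)}{n}\Ei\Bigl(\frac{\rho\log x}{n}\Bigr)=\mu(n)\,\frac{x^{\rho/n}}{\rho\log x}\Bigl(1+O\Bigl(\frac{n}{|\gamma|}\Bigr)\Bigr).
\]
For large $n$ (argument small) we instead use $\Ei(w)=\gamma_E+\log w+O(|w|)$, together with the fact that $\sum\mu(n)/n=0$ and $\sum\mu(n)\log n/n=-1$ converge, so the tail $n>N$ is handled by partial summation with $\sum_{n\le y}\mu(n)/n\to0$.

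This should give
\[
\ERi(\rho\log x)=\frac{x^{\rho}}{\rho\log x}+E(\rho),\qquad \sum_{0<|\gamma|\le T}m_\rho|E(\rho)|=O(T^{\Theta/2+\varepsilon})+O(\log^2T),
\]
since the $n=2$ terms contribute $x^{\beta/2}/|\rho|$ and $\sum 1/|\rho|\ll\log^2T$. The terms $n\ge2$ are bounded by $x^{1/2}\sum_{|\gamma|\le T}|\gamma|^{-1}$ up to the error from truncation. This is the \textbf{main obstacle}. The cancellation in $n$ for $n$ comparable to $|\gamma|$ is delicate, because both expansions are poor there, and one must verify that the errors summed over the roughly $T\log T$ zeros are $o(T^\theta)$. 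If the uniform error is only $O(1)$ per zero, giving $T\log T$, the plan fails. I would then refine by noting that the mid-range $n\sim|\gamma|$ gives terms $\mu(n)/n\cdot O(1)$, and use an effective prime number theorem bound $\sum_{n\le y}\mu(n)\ll y\,e^{-c\sqrt{\log y}}$ to get savings. Even so, a bound $o(T^{1/2})$ suffices only when $\theta>1/2$. For $\theta\le1/2$ I would reduce to the known fact that the sums are unbounded, or handle it via the $\Theta=1/2$ case separately.

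\emph{Main term.} Reduce to $S_T(x):=\sum_{0<|\gamma|\le T}m_\rho\,x^{\rho}/\rho$. The truncated von Mangoldt formula gives
\[
\psi_0(x)=x-S_T(x)-\log2\pi-\tfrac12\log(1-x^{-2})+O\Bigl(\frac{x\log^2(xT)}{T}\Bigr),
\]
so $S_T(x)$ converges and cannot grow like a power of $T$. This suggests the divergence must come instead from the non-uniformity in the earlier step. The real source is the terms with $n$ near $|\gamma|\log x/(2\pi k)$, i.e.\ $\Ei$ near its branch behaviour. Reconsidering, I would rather sum over $\rho$ first for each fixed $n$. Put $y=x^{1/n}$. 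Then $\sum_{|\gamma|\le T}\Ei(\rho\log y)$ equals, by the Riemann--von Mangoldt explicit formula in its $\mathrm{li}$ form, $\sum_{|\gamma|\le T}\mathrm{li}(y^\rho)$. For $y\to1^+$ (that is, $n>\log x\cdot T$) the sum behaves like $N(T)$-many terms $\approx\log(\rho\log y)+\gamma_E$, which has size about $T\log T\cdot\log(T/n)$. Weighting by $\mu(n)/n$ and summing over $n$ yields an oscillating sum over $n\le cT$.

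\emph{Extracting the power.} The key step is to show that $\sum_n\frac{\mu(n)}{n}F_T(x^{1/n})$, with $F_T$ the partial zero-sum, contains $\sum_{n\le T}\mu(n)\cdot(\text{smooth})$. Since $M(y)=\sum_{n\le y}\mu(n)$ is $\Omega(y^{\theta})$ for every $\theta<\Theta$ (Landau's oscillation theorem applied to $1/\zeta$, whose poles have real part up to $\Theta$), a partial summation converts this into $\Sigma R_T(x)\neq O(T^\theta)$. To conclude I would argue by contradiction. Assume $\Sigma R_T(x)=O(T^\theta)$, take a Mellin transform in $T$, and show that this yields analytic continuation of $1/\zeta(s)$ to $\Re e(s)>\theta$. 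That contradicts the existence of a zero with $\Re e(\rho)>\theta$. The unboundedness and divergence then follow since $\Theta\ge1/2>0$.
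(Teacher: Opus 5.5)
There is a genuine gap. The step that is supposed to produce the growth is only announced, never carried out, and what precedes it does not lead there.

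Your first route cannot work as stated. The bound $\sum_{0<|\gamma|\le T}m_\rho|E(\rho)|=O(T^{\Theta/2+\varepsilon})+O(\log^2T)$, combined with the convergence of $S_T(x)$ (which you correctly deduce from the truncated explicit formula for $\psi_0$), would give $\Sigma R_T(x)=O(T^{\Theta/2+\varepsilon})$. That is incompatible with the statement you want to prove. So, as you suspect, the growth is carried by the range where $n$ is comparable to or larger than $|\gamma|\log x$, which is exactly where your expansion of $\Ei$ is non-uniform. Your fallback ``for $\theta\le 1/2$ reduce to the known fact that the sums are unbounded'' is circular.

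Your second route, summing over $\rho$ first for each fixed $n$, needs control of $F_T(y)=\sum_{0<|\gamma|\le T}m_\rho\Ei(\rho\log y)$ uniformly as $y=x^{1/n}\to1^+$, including $n\asymp T$ and $n\gg T$. In that range essentially all zeros up to height $T$ have $|\rho\log y|\lesssim1$. So $F_T(y)$ is nowhere near its limit as $T\to\infty$, and the truncated explicit formula gives no usable error term. Consequently the ``key step'' that $\sum_n\frac{\mu(n)}{n}F_T(x^{1/n})$ contains $\sum_{n\le T}\mu(n)\cdot(\text{smooth})$ is unsupported. The closing Mellin argument then has nothing to act on: $\Sigma R_T(x)$ is a step function of $T$ jumping at the zero ordinates. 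You never derive an identity expressing its Mellin transform, or that of a proxy differing by $O(T^\theta)$, as a nonvanishing factor times $1/\zeta$ plus something holomorphic in $\Re e(s)>\theta$. Nor do you check that the resulting pole at some $\rho$ with $\Re e(\rho)>\theta$ is not cancelled.

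The missing ingredient is a representation of $\Sigma R_T(x)$, up to $o(1)$ and a constant, by a continuous integral in $T$ whose Mellin transform is computable. The paper gets this in three steps.
\begin{enumerate}
\item \emph{Kernel identity.} One has $\ERi(z)=-\int_0^1Q(u)e^{zu}\,du$ with $Q(u)=u^{-1}B(u^{-1})$, and $\int_0^1Q(u)u^z\,du=-1/(z\zeta(z+1))$.
\item \emph{Contour reduction.} Apply the residue theorem to $\ERi(s\log x)\bigl(-\zeta'(s)/\zeta(s)\bigr)$ on rectangles with sides $\Re e(s)=c>1$ and $\Re e(s)=-a<0$ and good heights $T'$. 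The right side tends to $\pi_0(x)$. The horizontal sides are $o(1)$, because $\ERi$ decays like $(\log T)^{-B}$ on vertical strips while $\zeta'/\zeta\ll\log^2T'$ there. On $\Re e(s)=-a$ the functional equation leaves $\frac{1}{\pi\log x}\int_1^{T\log x}\log\bigl(\frac{y}{2\pi\log x}\bigr)\Re e\,\ERi(-a\log x+iy)\,dy$ as the only term that can grow.
\item \emph{Pole obstruction.} The truncated Mellin transform of $\Re e\,\ERi(-A+iy)$ contains $-\Gamma(1-z)\sin(\frac{\pi z}{2})M_A(z-1)$, and $M_A$ has the summand $-1/(z\zeta(z+1))$. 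Hence it has a pole at every nontrivial zero. The logarithmic weight raises the pole order, so it cannot be cancelled.
\end{enumerate}
Your intuition that Möbius sums at scale $n\asymp T$ produce size $T^{\Theta}$ is sound. Heuristically $Q(u)$ is of size $u^{-\Theta}$ near $0$, and its Fourier transform integrated against $\log y$ up to $T\log x$ grows like $T^{\Theta}$. But without this contour-integral reduction there is no rigorous bridge from $\Sigma R_T(x)$ to $1/\zeta$.
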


\noindent It follows from this result, that the expression $\sum_{\rho}R(x^\rho)$ of \eqref{eq:Rdef2} is not a well-defined complex number and hence an equation of the form \eqref{eq:1} cannot hold with the usual meaning of summation over the zeros of $\zeta(s)$ as recalled above.\\\\
We sketch the main idea of the proof of our theorem and refer for all details to the body of the paper below. The starting point is the integral representation
$$
\ERi(z)=\int_0^1 \sum_{n\leq u^{-1}}\frac{\mu(n)}{n} u^{-1}e^{zu}\,du,
$$
cf.\ Prop.\ \ref{prop:kernel}. For a fixed $A>0$, this turns
$$
f_A(y):=\Re e(\ERi(-A+iy))
$$
into the Fourier cosine transform of an $L^1((0,1))$-function. Its truncated Mellin transform can then be continued meromorphically and the continuation inherits poles from the non-trivial zeros of $\zeta(s)$, cf.\ Prop.\ \ref{prop:HA}. We show that these poles translate directly into a growth obstruction for the logarithmically weighted primitive function of $f_A(y)$, 
$$
P_A(Y)=\int_1^Y \log(yC^{-1})\,f_A(y)\,dy.
$$
More precisely, if $\rho$ is any non-trivial zero of $\zeta(s)$, then $P_A(Y)$ cannot be $O(Y^\theta)$ for any $\theta<\Re e(\rho)$. See Prop.\ \ref{prop:Omega} for details.\\\\
The second part of the argument transfers this result into an obstruction on the partial sums $\Sigma R_T(x)$. We integrate
$$
\ERi(\log(x)s)\left(-\frac{\zeta'(s)}{\zeta(s)}\right)
$$
over the contour given by a rectangle $\Delta_T(a,c)$, whose vertical sides lie in $\Re(s)=c>1$ and $\Re(s)=-a<0$ and whose height $T>0$ is not the ordinate $\Im m(\rho)$ of a non-trivial zero $\rho$.  The residue theorem produces the difference of $\ERi(\log(x))$ with the partial sum $\Sigma R_T(x)$ over the non-trivial zeros, cf.\ Prop.\ \ref{lem:residue}.\\\\
On the other hand, the right vertical side of this contour-integral recovers the averaged prime-counting function $\pi_0(x)$, see Prop.\ \ref{lem:right}; while on the left vertical side the functional equation for $\zeta(s)$, together with Stirling's formula, isolates $P_{a\log(x)}(T\log x)$ as the only term capable of unbounded growth -- all remaining contributions have finite limits as $T\to\infty$. It is our choice of standard good ordinates that ensures that the contribution of the horizontal sides of the rectangle $\Delta_T(a,c)$ are negligible in the limit. See Lem.\ \ref{lem:good} and Prop.\ \ref{prop:left}\\\\
Thus, along these good ordinates $T$, avoiding the imaginary parts of all non-trivial zeros of $\zeta(s)$, while $T\to\infty$, 
$$
\Sigma R_T(x) = \frac{1}{\pi\log x}\,P_{a\log(x)}(T\log x)+D_{a,x}+o(1).
$$
for a constant $D_{a,x}$ independent of $T$. Since good ordinates may be chosen within distance $O(1)$ of every sufficiently large $T$, a bound $\Sigma R_T(x) =O(T^\theta)$ would contradict the above growth obstruction for $P_{a\log(x)}(T)$, whenever $\theta<\Re e(\rho)$ for some non-trivial zero $\rho$ of $\zeta(s)$. The reader will find this final argument eventually carried out in Sect.\ \ref{sect:proof}.\\\\
We conclude our paper with a result, which deals with the contribution of the trivial zeros, written as $\sum_{k\geq 1} R(x^{-2k})$ in \eqref{eq:Rdef2}. The divergence of the corresponding series can be established by suitably adapting, while substantially simplifying the above sketched strategy underlying the proof of our Main Theorem: More precisely, we show that for any given $x>1$ and $\theta<\Theta$ 
$$\Sigma R^{triv}_T(x):=\sum_{1\leq k\leq T} \ERi(-2k\log(x))$$
is not $O(T^\theta)$ as $T\to\infty$. In particular, $\limsup_{T\to\infty}|\Sigma R^{triv}_T(x)|=\infty$ and hence this series diverges. We refer the reader to Prop.\ \ref{prop:trivial-zeros} in Sect.\ \ref{sect:41} for a proof.


\section{Non-trivial zeros and growth}
\subsection{Notational preliminaries}
We use the standard Vinogradov- and Landau-notation. Thus, if $f$ is a complex-valued and $g$ is non-negative real-valued function, then $f(T)\ll g(T)$ means that there exists a constant $C>0$ such that $|f(T)|\le C\cdot g(T)$ for all sufficiently large $T$. Equivalently, $f(T)=O(g(T))$. If we want to indicate that the implied constant is allowed to depend on one or more fixed parameters, we indicate this by subscripts. For example, $f(T)\ll_{a,b,c} g(T)$ means that $|f(T)|\le C_{a,b,c}\cdot g(T)$ for all sufficiently large $T$, where $C_{a,b,c}>0$ may depend on ${a,b,c}$, but not on $T$. We write synonymously $f(T)=O_{a,b,c}(g(T))$.

\subsection{A kernel representation of \texorpdfstring{$\ERi$}{ERi}} 
First, we recall the definition of the exponential integral function: For $s\in\C\setminus [0,\infty)$ it is given by
$$\Ei(s):=\int_{-\infty}^s \frac{e^z}{z}\,dz$$
where integration is along any path not crossing $[0,\infty)$, and then extended to $s\in\C^*$ by setting
$$\Ei(s):=\lim_{\varepsilon\to 0}\frac{\Ei(x+\varepsilon i)+\Ei(x-\varepsilon i)}{2}$$
for $s\in (0,\infty)$. Moreover, we let
$$\ERi(s):=\sum_{n\geq 1} \frac{\mu(n)}{n} \Ei\left(\frac{s}{n}\right),$$ 
continuously completed by $\ERi(0):=1$ (in the above series and in what follows, $\mu$ denotes the M\"obius function). Equivalently, $\ERi(s)$ may be defined as the unique entire function, which is $1$ at $s=0$ and whose $k$-th derivatives satisfy $\ERi^{(k)}(0)=\frac{1}{k \zeta(k+1)}$ for all $k\geq 1$, i.e., it admits a Taylor series expansion of the form
\begin{equation}\label{eq:ERI}
\ERi(s)=1+\sum_{k=1}^{\infty}
\frac{s^k}{k\,k!\,\zeta(k+1)}.
\end{equation}
See \cite{Elliott2025}, \S 4.4-4.5.\\\\
For $y>0$, resp. $0<u<1$, we define, cf.\ \cite{MontgomeryVaughan}, (6.19),
$$
B(y):=\sum_{n>y}\frac{\mu(n)}{n}=-\sum_{n\le y}\frac{\mu(n)}{n}, \qquad {\rm and }\qquad Q(u):=u^{-1}B(u^{-1}).
$$
It follows from \cite{MontgomeryVaughan}, (6.18), that $B(y)\ll \log(y)^{-2}$. Hence, using that $B(y)$ is bounded on compact intervals, we obtain
$$
\int_0^1 |Q(u)|\,du = \int_1^\infty \frac{|B(y)|}{y}\,dy <\infty.
$$
implying $Q\in L^1((0,1))$. 

\begin{lemma}\label{lem:MellinQ}
For $\Re e(z)>0$,
$$
\int_0^1Q(u)u^z\,du=-\frac1{z\zeta(z+1)}
$$
and
$$
\int_0^1Q(u)\,du=-1.
$$
\end{lemma}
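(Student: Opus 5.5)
Substituting $y=u^{-1}$ turns the integral into
$$\int_0^1 Q(u)u^z\,du=\int_1^\infty B(y)\,y^{-z-1}\,dy,$$
so the first claim is a statement about the Mellin transform of the tail of the Möbius series. For $\Re e(z)>0$ this integral converges absolutely, because $B$ is bounded. It then suffices to evaluate it for real $z$ in a range where Fubini is available and to extend by analytic continuation. Both sides are holomorphic on $\Re e(z)>0$: the left side by dominated convergence together with $B\ll 1$, the right side because $\zeta(s)\neq 0$ for $\Re e(s)\ge 1$ (so $1/\zeta(z+1)$ is holomorphic there) and $z\neq 0$. Hence it is enough to prove the identity for, say, real $z>1$.

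For $z>1$ (and by the same argument for every $\Re e(z)>0$) I use the representation $B(y)=-\sum_{n\le y}\mu(n)/n$, valid for $y\ge1$. This representation encodes the prime number theorem in the form $\sum_n\mu(n)/n=0$, which the paper takes as given through its definition of $B$. It gives
$$\int_1^\infty B(y)y^{-z-1}\,dy=-\int_1^\infty\sum_{n\le y}\frac{\mu(n)}{n}\,y^{-z-1}\,dy=-\sum_{n\ge1}\frac{\mu(n)}{n}\int_n^\infty y^{-z-1}\,dy=-\frac1z\sum_{n\ge1}\frac{\mu(n)}{n^{z+1}}.$$
The interchange of sum and integral is justified by Tonelli, since $\sum_n n^{-1}\int_n^\infty y^{-\Re e(z)-1}\,dy<\infty$ for $\Re e(z)>0$. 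Applying the Dirichlet series $\sum\mu(n)n^{-s}=1/\zeta(s)$, valid for $\Re e(s)>1$, with $s=z+1$ yields $-1/(z\zeta(z+1))$. So the first formula actually holds directly on all of $\Re e(z)>0$, and no continuation is needed.

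For the second formula I let $z\to0^+$ along the reals. On the left, $|Q(u)u^z|\le|Q(u)|$ and $Q\in L^1((0,1))$, so dominated convergence gives $\int_0^1Q(u)u^z\,du\to\int_0^1Q(u)\,du$. On the right, $\zeta(1+z)=1/z+O(1)$ gives $z\zeta(z+1)\to1$, so $-1/(z\zeta(z+1))\to-1$.

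The only delicate point is the integrability of $Q$ near $u=0$, that is, the decay of $B(y)$ needed for the limit $z\to0$. This is exactly the bound $B(y)\ll\log(y)^{-2}$ quoted from Montgomery--Vaughan before the lemma, so no new difficulty arises.
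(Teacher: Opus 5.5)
Your proof is correct and follows the paper's route: the substitution $y=u^{-1}$, the identification of $\int_1^\infty B(y)y^{-z-1}\,dy$ with $-\frac{1}{z}\sum_{n}\mu(n)n^{-z-1}=-\frac{1}{z\zeta(z+1)}$ for $\Re e(z)>0$, and dominated convergence as $z\to0^+$. The only difference is in the middle step: you write $B(y)=-\sum_{n\le y}\mu(n)/n$ and interchange sum and integral by Tonelli, while the paper uses Abel summation and lets the boundary term $B(X)X^{-z}$ tend to $0$; these are the same computation, and your opening remarks on analytic continuation are, as you note yourself, unnecessary.
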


\begin{proof}
Substituting $y=1/u$, we obtain
\begin{equation}\label{eq:qB}
\int_0^1Q(u)u^z\,du = \int_1^\infty B(y)y^{-z-1}\,dy.
\end{equation}
Next, Abel summation, applied to
$a_n=\mu(n)/n$ and $f(t)=t^{-z}$, gives, for $X\ge1$,
$$
\sum_{n\le X}\frac{\mu(n)}{n^{z+1}} = -B(X)X^{-z} - z\int_1^X B(y)y^{-z-1}\,dy.
$$
As for $\Re e(z)>0$, $-B(X)X^{-z}\rightarrow0$ as $X\to\infty$ (since $\sum_{n=1}^{\infty}\frac{\mu(n)}{n}=0$), letting $X\to\infty$, implies
$$
\frac1{\zeta(z+1)} = -z\int_1^\infty B(y)y^{-z-1}\,dy.
$$
Together with \eqref{eq:qB} this yields
$$
\int_0^1Q(u)u^z\,du = -\frac1{z\zeta(z+1)}.
$$
for $\Re e(z)>0$ as claimed. Finally, recalling ${\rm Res}_{s=1}\zeta(s)=1$, we get $\lim_{z\to 0} \frac{-1}{z\zeta(1+z)}=-1$. Since $Q\in L^1((0,1))$, as we have seen above, dominated convergence, with $z\to0^+$, gives
$$-1=\lim_{z\to 0^+} \frac{-1}{z\zeta(1+z)}=\lim_{z\to 0^+}\int_0^1Q(u)u^z\,du
=\int_0^1Q(u)\,du,$$ 
whence the second assertion.
\end{proof}

\begin{proposition}\label{prop:kernel}
For every $z\in\C$, $\ERi(z)=-\int_0^1Q(u)e^{zu}\,du.$
\end{proposition}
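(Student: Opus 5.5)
The plan is to compare Taylor coefficients at $z=0$. Since $Q\in L^1((0,1))$ and $|e^{zu}|\le e^{|z|}$ for $u\in(0,1)$, the function $F(z):=-\int_0^1Q(u)e^{zu}\,du$ is entire. Expanding $e^{zu}=\sum_{k\ge0}\frac{z^ku^k}{k!}$ gives
\[
\sum_{k\ge0}\frac{|z|^k}{k!}\int_0^1|Q(u)|u^k\,du\le e^{|z|}\|Q\|_{L^1}<\infty,
\]
so by Fubini (dominated convergence) we may interchange sum and integral. This yields
\[
F(z)=-\sum_{k\ge0}\frac{z^k}{k!}\int_0^1Q(u)u^k\,du .
\]

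Lemma \ref{lem:MellinQ} supplies every moment. For $k=0$ we get $\int_0^1Q(u)\,du=-1$, so the constant term of $F$ is $1$. For $k\ge1$, taking $z=k$ (with $\Re e(k)>0$) gives $\int_0^1Q(u)u^k\,du=-\frac{1}{k\zeta(k+1)}$, so the $k$-th term of $F$ is $\frac{z^k}{k\,k!\,\zeta(k+1)}$. Hence $F(z)=1+\sum_{k\ge1}\frac{z^k}{k\,k!\,\zeta(k+1)}$, which is exactly the Taylor series \eqref{eq:ERI} of $\ERi$. Both functions are entire and their power series at $0$ agree, so $F=\ERi$ on all of $\C$.

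I expect no real obstacle. The only points needing care are justifying the interchange of the series and the integral, which the uniform bound $e^{|z|}\|Q\|_{L^1}$ handles, and relying on the characterization of $\ERi$ through \eqref{eq:ERI} (taken from \cite{Elliott2025}) rather than through the Möbius series of $\Ei$. Working directly from the Möbius series would instead force a rearrangement of a conditionally convergent sum, which this route avoids.
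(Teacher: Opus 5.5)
Your proposal is correct and follows the paper's proof: you integrate the exponential series termwise against $Q\in L^1((0,1))$, evaluate the moments via Lemma \ref{lem:MellinQ}, and compare with the Taylor expansion \eqref{eq:ERI}. Your explicit majorant $e^{|z|}\|Q\|_{L^1}$ justifies the interchange a little more transparently than the paper's appeal to uniform convergence of the exponential series on $[0,1]$.
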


\begin{proof}
For fixed $z\in\mathbb C$, the expansion $e^{zu}=\sum_{k=0}^{\infty}\frac{z^k u^k}{k!}$ converges uniformly for $u\in[0,1]$. Since $Q\in L^1((0,1))$, we may therefore integrate termwise. Using Lem.\ \ref{lem:MellinQ}, and recalling \eqref{eq:ERI}, we hence obtain
$$
-\int_0^1Q(u)e^{zu}\,du = 1+\sum_{k=1}^{\infty} \frac{z^k}{k\,k!\,\zeta(k+1)}=\ERi(z),
$$
as desired.
\end{proof}

\subsection{Non-trivial zeros as poles of a truncated Mellin transform}
Let $A>0$ be arbitrary but fixed. For $0<u<1$ put $q_A(u):=e^{-Au}Q(u)$, and define, originally for $z\in\C$ with $\Re e(z)>0$
$$
M_A(z):=\int_0^1q_A(u)u^z\,du = \int_0^1e^{-Au} \ u^z \ Q(u)\,du.
$$
We remark that since $q_A\in L^1((0,1))$, the same integral converges absolutely also on the boundary line $\Re e(z)=0$, to which we extend it. In fact, one gets much more:

\begin{lemma}\label{lem:MA}
The function $M_A$ admits the meromorphic continuation to all $z\in\C$, given by
\begin{equation}\label{eq:MA}
M_A(z)= \sum_{j=0}^{\infty} \frac{(-1)^{j+1}A^j}{j!(z+j)\zeta(z+j+1)}.
\end{equation}
If $\rho$ is a non-trivial zero of $\zeta(s)$, then $M_A(z)$ has pole at $z=\rho-1$.
\end{lemma}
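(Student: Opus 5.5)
The plan is to expand $e^{-Au}$ in its Taylor series inside the defining integral, integrate termwise using Lem.\ \ref{lem:MellinQ}, and then show that the resulting series converges locally uniformly away from its poles. For $\Re e(z)>0$ we write $e^{-Au}=\sum_{j\ge0}(-A)^ju^j/j!$, which converges uniformly on $[0,1]$. Since $Q\in L^1((0,1))$ and $|u^z|\le1$ there, termwise integration is legitimate. Lem.\ \ref{lem:MellinQ}, applied with $z+j$ in place of $z$ (note $\Re e(z+j)>0$), gives
$$M_A(z)=\sum_{j\ge0}\frac{(-A)^j}{j!}\int_0^1Q(u)u^{z+j}\,du=\sum_{j\ge0}\frac{(-1)^{j+1}A^j}{j!\,(z+j)\zeta(z+j+1)}.$$
This is \eqref{eq:MA} on the half-plane $\Re e(z)>0$.

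Next we show that the right-hand side of \eqref{eq:MA} defines a meromorphic function on all of $\C$. Fix a compact set $K\subset\C$ and choose $J$ so large that $\Re e(z)+J\ge 1$ for all $z\in K$. For $j\ge J+1$ we have $\Re e(z+j+1)\ge 2$, and hence $|1/\zeta(z+j+1)|\le \zeta(2)/\zeta(4)$, or simply $\le\sum_n n^{-2}$. Also $|z+j|\ge1$ for these $j$. So the tail $\sum_{j>J}$ is bounded termwise by $C A^j/j!$ and converges uniformly on $K$ to a holomorphic function. The finitely many remaining terms $j\le J$ are meromorphic, because $1/\zeta$ is meromorphic. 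Thus the series is meromorphic on a neighbourhood of every compact set, hence on $\C$. It agrees with $M_A$ on $\Re e(z)>0$, so it is the meromorphic continuation by the identity theorem.

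It remains to find the pole at $z_0=\rho-1$, where $0\le\Re e(\rho)\le1$ and $\rho$ is non-real. The $j=0$ term is $-1/(z\zeta(z+1))$. At $z_0$ the factor $\zeta(z+1)$ vanishes to order $m_\rho\ge1$, while $z_0\neq0$ because $\rho$ is non-real. So this term has a pole of order $m_\rho$ at $z_0$. The main obstacle is to rule out cancellation with the terms $j\ge1$. Such a term $1/((z+j)\zeta(z+j+1))$ is singular at $z_0$ only if $\rho+j$ is a zero of $\zeta$, or if $z_0=-j$. Both are impossible: $\Re e(\rho+j)\ge1$ and $\zeta$ has no zeros on $\Re e(s)\ge1$, and $z_0$ is non-real. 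Hence every term with $j\ge1$, and therefore the whole uniformly convergent tail, is holomorphic at $z_0$. Consequently $M_A$ has a pole of order exactly $m_\rho$ at $z=\rho-1$, with the same principal part as $-1/(z\zeta(z+1))$, and in particular a genuine pole.
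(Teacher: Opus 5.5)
Your proposal is correct and follows essentially the same route as the paper: termwise integration of the Taylor series of $e^{-Au}$ via Lem.\ \ref{lem:MellinQ} on $\Re e(z)>0$, normal convergence of the tail on compact sets, and isolating the pole of order $m_\rho$ in the $j=0$ term while checking that every term with $j\ge1$ is holomorphic at $\rho-1$. The remaining differences are cosmetic. You bound $1/\zeta$ on $\Re e(s)\ge 2$ where the paper uses $\zeta(z+j+1)\to1$ uniformly, and you use that $\rho$ is non-real and that $\zeta$ does not vanish on $\Re e(s)\ge1$ where the paper uses $0<\Re e(\rho)<1$.
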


\begin{proof}
For $\Re e(z)>0$, we expand $e^{-Au}$ and use Lem.\ \ref{lem:MellinQ} to obtain
$$
\begin{aligned}
M_A(z)
&= \int_0^1\sum_{j=0}^{\infty}\frac{(-Au)^j}{j!}
Q(u)u^{z}\,du\\
&= \sum_{j=0}^{\infty}\frac{(-A)^j}{j!} \int_0^1Q(u)u^{z+j}\,du\\
&= \sum_{j=0}^{\infty} \frac{(-1)^{j+1}A^j}{j!(z+j)\zeta(z+j+1)},
\end{aligned}
$$
where we just remark that exchanging summation with integration is justified by absolute convergence of $\sum_{j=0}^{\infty}\int_0^1\big | \frac{(-Au)^j}{j!}
Q(u)u^{z} \big|\,du\leq e^A \|Q\|_1<\infty$. Now, let $K\subset\C$ be a compact subset. For all sufficiently large $j$, one has, uniformly for $z\in K$,
$$
\left|\frac{(-1)^{j+1}A^j}{j!(z+j)\zeta(z+j+1)}\right|\ll_K \frac{A^j}{j!\,j},
$$
since $\zeta(z+j+1)$ converges uniformly to $1$ on $K$ as $j\to\infty$, since
$$ \sup_{z\in K} \bigl|\zeta(z+j+1)-1\bigr|\leq \sum_{n\geq 2} n^{-(j+1+\min_{z\in K}\Re e(z) )}\underset{j \to \infty}{\longrightarrow} 0.$$ Hence the tail of the series converges normally on \(K\). Since the finitely many remaining terms are meromorphic, the series \eqref{eq:MA} is indeed a meromorphic continuation of $M_A(z)$ to all $z\in\C$.\\\\
Finally, at $z=\rho-1$, where $\rho$ denotes again a non-trivial zero of $\zeta(s)$, the $j=0$ term of the series \eqref{eq:MA} has a pole of the same order as the zero
of $\zeta(s)$ at $\rho$. For every $j\ge1$, $\Re e(\rho+j)>1$, hence $\zeta(\rho+j)\neq0$. Moreover
$$
\Re e(\rho-1+j)=\Re e(\rho+j)-1>0,
$$
so $\rho-1+j\neq0$. Therefore, all terms with $j\ge1$ in \eqref{eq:MA} are all holomorphic at $z=\rho-1$, and no cancellation of the just mentioned pole of the term corresponding to $j=0$ is possible.
\end{proof}

\noindent For $Y\in\R$ let us abbreviate 
$$f_A(Y):=\Re e(\ERi(-A+iY))=-\int_0^1q_A(u)\cos(Yu)\,du.$$ 
Moreover, for $z\in\C$ with $\Re e(z)>1$, let us define
$$
H_A(z):=\int_1^\infty f_A(Y)Y^{-z}\,dY = -\int_1^\infty \int_0^1q_A(u)\cos(Yu)Y^{-z}\,du\,dY .
$$
We get

\begin{proposition}\label{prop:HA}
The truncated Mellin transform $H_A$ admits a meromorphic continuation to all $z\in\C$ by
\begin{equation}\label{eq:HAtrans}
H_A(z) = -\Gamma(1-z)\sin\left(\frac{\pi z}{2}\right)\,M_A(z-1) +
\sum_{k=0}^{\infty}
\frac{(-1)^kM_A(2k)}{(2k)!(2k+1-z)}.
\end{equation}
In particular, $H_A$ has a pole at every non-trivial zero $\rho$ of $\zeta(s)$.
\end{proposition}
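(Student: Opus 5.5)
Split $\int_1^\infty = \int_0^\infty - \int_0^1$ in the $Y$-variable: the first piece gives the Gamma-factor term via the classical Mellin transform of $\cos$, and the second gives the sum over $k$.

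\textbf{Step 1: the piece over $(0,1)$.} Since $q_A\in L^1((0,1))$ and $|\cos(Yu)|\le 1$, Fubini applies. Expanding $\cos(Yu)=\sum_k (-1)^k (Yu)^{2k}/(2k)!$ (uniformly convergent for $Y,u\in[0,1]$) gives, for $\Re e(z)<1$,
\[
-\int_0^1 f_A(Y)Y^{-z}\,dY=\sum_{k\ge0}\frac{(-1)^k}{(2k)!}M_A(2k)\int_0^1Y^{2k-z}\,dY=\sum_{k\ge0}\frac{(-1)^kM_A(2k)}{(2k)!(2k+1-z)}.
\]
Since $|M_A(2k)|\le\|q_A\|_1$, this series converges normally on compacta away from the points $z=2k+1$. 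It therefore defines a meromorphic function on all of $\C$ whose only poles are simple and lie at odd positive integers.

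\textbf{Step 2: the piece over $(0,\infty)$ for $0<\Re e(z)<1$.} Use $\int_0^\infty \cos(Yu)Y^{-z}\,dY=\Gamma(1-z)\sin(\pi z/2)\,u^{z-1}$, valid for $0<\Re e(z)<1$ and $u>0$. Inserting this formally gives
\[
\int_0^\infty f_A(Y)Y^{-z}\,dY=-\Gamma(1-z)\sin(\pi z/2)\int_0^1q_A(u)u^{z-1}\,du=-\Gamma(1-z)\sin(\pi z/2)M_A(z-1).
\]
Here $\Re e(z-1)\in(-1,0)$, so $M_A(z-1)$ must be understood through Lemma \ref{lem:MA}. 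This is the main obstacle: the $u$-integral $\int_0^1 q_A(u)u^{z-1}\,du$ need not converge absolutely near $u=0$, and the $Y$-integral is only conditionally convergent, so Fubini cannot be applied directly.

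I would instead work in a strip where everything converges.
\begin{itemize}
\item Take $1<\Re e(z)<2$ and compute $H_A(z)=\int_1^\infty f_A Y^{-z}\,dY$ directly.
\item Integrate by parts in $Y$ using $\cos(Yu)=\partial_Y\sin(Yu)/u$. This is legitimate because $f_A(Y)=\Re e\,\ERi(-A+iY)$ is bounded, which follows from Prop.\ \ref{prop:kernel}.
\item Alternatively, regularise with $e^{-\varepsilon Y}$, apply Fubini, use $\int_0^\infty\cos(Yu)Y^{-z}e^{-\varepsilon Y}\,dY$, and let $\varepsilon\to0$.
\end{itemize}

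A cleaner route I would try first is to subtract the first Taylor terms. Write $q_A(u)=(q_A(u)-q_A^{\le})+\dots$ in the Mellin sense: $M_A(z-1)$ for $\Re e(z)\in(0,1)$ is given by the continuation $\int_0^1q_A(u)u^{z-1}\,du$, which converges absolutely precisely when $\Re e(z)>0$ (as $q_A\in L^1$ and $|u^{z-1}|\le u^{\Re e z-1}$). Then the double integral $\int_0^1\int_0^\infty |q_A(u)|\,|\cos(Yu)Y^{-z}|$ still diverges, but the truncation $\int_0^R$ followed by $R\to\infty$ can be handled by dominated convergence. The point is that $\int_0^R\cos(Yu)Y^{-z}\,dY$ is bounded by $C\,u^{\Re e z-1}$ uniformly in $R$, which is integrable against $|q_A|$.

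\textbf{Step 3: assembling the formula.} Combining Steps 1 and 2 gives the identity \eqref{eq:HAtrans} on $0<\Re e(z)<1$. The right-hand side is meromorphic on $\C$ by Lemma \ref{lem:MA} and Step 1. To see it also agrees with $H_A$ on $\Re e(z)>1$, I would either check directly that the same computation holds with $\int_1^\infty$, which converges there after the integration by parts above, or argue that $H_A$ itself continues to $\Re e(z)>0$. Since $f_A$ is bounded, boundary terms show $\int_1^\infty f_AY^{-z}\,dY$ converges for $\Re e(z)>0$ after writing $f_A=-\frac{d}{dY}\int_0^1 q_A(u)\sin(Yu)/u\,du$. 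The primitive $\int_0^1 q_A(u)\sin(Yu)/u\,du$ is controlled using $B(y)\ll\log(y)^{-2}$. Either way the identity extends, and this gives the continuation.

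\textbf{Step 4: the poles.} Take a non-trivial zero $\rho$, so $0<\Re e\rho<1$. By Lemma \ref{lem:MA}, $M_A(z-1)$ has a pole at $z=\rho$. At $z=\rho$:
\begin{itemize}
\item $\Gamma(1-z)$ is finite and nonzero there;
\item $\sin(\pi\rho/2)\ne0$, since $\rho\notin 2\mathbb Z$;
\item the sum over $k$ is holomorphic, since its poles lie at odd integers only.
\end{itemize}
Hence $H_A$ has a pole at $\rho$, of the same order as the multiplicity of $\rho$.
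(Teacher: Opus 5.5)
There is a genuine gap in Steps 2--3. You split $\int_1^\infty=\int_0^\infty-\int_0^1$ \emph{after} integrating against $q_A$, i.e.\ for the Mellin transform of $f_A$ itself, and you try to prove the identity in the strip $0<\Re e(z)<1$. In that strip the objects you manipulate do not exist as integrals, because $-\Gamma(1-z)\sin(\pi z/2)M_A(z-1)$ has a pole at every non-trivial zero $\rho$, and these lie in that strip.

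\begin{itemize}
\item Your claim that $\int_0^1 q_A(u)u^{z-1}\,du$ converges absolutely ``precisely when $\Re e(z)>0$'' is wrong. For $\Re e(z)<1$ the weight $u^{\Re e(z)-1}$ is unbounded at $0$, and $q_A\in L^1$ gives nothing. Indeed $\int_0^1|Q(u)|u^{\Re e(z)-1}du=\int_1^\infty|B(y)|y^{-\Re e(z)}dy$, while $\int_1^\infty B(y)y^{-z}dy=-1/((z-1)\zeta(z))$ has poles at every $\rho$. So your dominated-convergence argument with majorant $|q_A(u)|u^{\Re e(z)-1}$ fails.
\item $\int_1^\infty f_A(Y)Y^{-z}\,dY$ cannot converge, even conditionally, on $\Re e(z)>0$. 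Convergence at some $z_0$ forces holomorphy on $\Re e(z)>\Re e(z_0)$, contradicting the very poles at $\rho$ you want to prove.
\item Hence the second option in Step 3 is false. The primitive $\int_0^1q_A(u)\sin(Yu)u^{-1}\,du$ is not bounded; by the argument of Prop.~\ref{prop:Omega} it is not $O(Y^\theta)$ for any $\theta<\Theta$.
\item ``The identity on $0<\Re e(z)<1$'' has no meaning, since $H_A$ is only defined for $\Re e(z)>1$.
\end{itemize}

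The missing idea is to do the analytic continuation at the level of the kernel, for each fixed $u>0$, before integrating in $u$.

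\begin{itemize}
\item For fixed $u$, $K(z,u):=\int_1^\infty Y^{-z}\cos(uY)\,dY$ is holomorphic on $\Re e(z)>0$; one integration by parts shows this.
\item On $0<\Re e(z)<1$ your two computations are legitimate for this kernel: the Mellin transform of cosine, and termwise integration of the cosine series over $[0,1]$. They give $K(z,u)=\Gamma(1-z)\sin(\pi z/2)u^{z-1}-\sum_k(-1)^ku^{2k}/((2k)!(2k+1-z))$.
\item By uniqueness of continuation this identity persists to $\Re e(z)>1$; the apparent poles at odd integers cancel.
\item Only then, for $\Re e(z)>1$, apply Fubini to $H_A(z)=-\int_0^1q_A(u)K(z,u)\,du$. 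Here $\int_0^1q_A(u)u^{z-1}du=M_A(z-1)$ converges absolutely because $\Re e(z-1)>0$, and termwise integration of the series produces the $M_A(2k)$ terms.
\end{itemize}

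This gives the formula on $\Re e(z)>1$, where $H_A$ is defined. Lemma~\ref{lem:MA} then makes the right-hand side a meromorphic continuation.

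Your bullet ``take $1<\Re e(z)<2$ and compute directly'' points in this direction, but without the per-$u$ continuation it does not produce the formula. The $e^{-\varepsilon Y}$ regularisation does not help either, since $\int_0^\infty\cos(Yu)Y^{-z}e^{-\varepsilon Y}dY$ still diverges at $Y=0$ for $\Re e(z)\ge1$. Steps 1 and 4 are fine.
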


\begin{proof}
We start off with the following observation: For fixed $u>0$, the integral $\int_1^\infty Y^{-z}\cos(uY)\,dY$ has an analytic continuation to an entire function in $z\in\C$ by virtue of
\begin{equation}\label{eq:KA}
\int_1^\infty Y^{-z}\cos(uY)\,dY=\Gamma(1-z)\sin\left(\frac{\pi z}{2}\right)\,u^{z-1}-\sum_{k=0}^{\infty}\frac{(-1)^k u^{2k}}{(2k)!(2k+1-z)}.
\end{equation}
Indeed, for $0<\Re e(z)<1$, the integral  converges locally uniformly by Dirichlet's test. Moreover, the standard Mellin transform gives
\begin{equation}\label{eq:KA1}
\int_0^\infty Y^{-z}\cos(uY)\,dY = \Gamma(1-z)\sin\left(\frac{\pi z}{2}\right)\,u^{z-1}.
\end{equation}
Integrating the power series of cosine on $[0,1]$ term by term, yields
\begin{equation}\label{eq:KA2}
\int_0^1Y^{-z}\cos(uY)\,dY= \sum_{k=0}^{\infty}\frac{(-1)^k u^{2k}}{(2k)!(2k+1-z)},
\end{equation}
proving \eqref{eq:KA} in the strip $0<\Re e(z)<1$. We claim that the right hand side of \eqref{eq:KA} is an entire function in $z\in\C$: In fact, since $\sin\left(\frac{\pi z}{2}\right)=0$ at positive even integers, the first term $\Gamma(1-z)\sin\left(\frac{\pi z}{2}\right)\,u^{z-1}$ given by \eqref{eq:KA1} is a meromorphic function, whose poles are at the positive odd integers. In order to analyze the second term, given by \eqref{eq:KA2}, let
$$
K\subset\mathbb C\setminus\{2m+1, m\geq 0\}
$$
be any compact subset and choose any $R>0$ such that $|w|\le R$ for all $w\in K$. For all sufficiently large $k$,
$$
|2k+1-w| \ge 2k+1-|w| \ge k \qquad (w\in K).
$$
Hence
$$
\sup_{w\in K} \left| \frac{(-1)^k u^{2k}}{(2k)!(2k+1-w)}\right|\le\frac{u^{2k}}{(2k)!\,k}.
$$
Since $\sum_{k=1}^{\infty} \frac{u^{2k}}{(2k)!\,k}<\infty,$ the series in \eqref{eq:KA2} converges absolutely and uniformly on $K$.  It therefore defines a holomorphic function on $\mathbb C\setminus\{2m+1, m\geq 0\}$, and hence a meromorphic function on $\mathbb C$, with poles (possibly) only at the positive odd integers.\\\\ 
To settle the entireness of the right hand side of \eqref{eq:KA} (and hence our intermediate claim, made at the very beginning of this proof), it remains to show that these apparent poles at the odd positive integers of both terms in \eqref{eq:KA1} and in \eqref{eq:KA2} finally cancel each other in the respective Laurent series expansion, i.e., that in the difference in \eqref{eq:KA} these singularites are removed. For this purpose we return to the original integral.  For $\Re e(z)>0$, integration by parts gives
$$\int_1^\infty Y^{-z}\cos(uY)\,dY=-\frac{\sin u}{u}+\frac{z}{u} \int_1^\infty Y^{-z-1}\sin(uY)\,dY.$$
Indeed, the boundary term at infinity vanishes since $|Y^{-z}\sin(uY)|\le Y^{-\Re e(z)}\longrightarrow 0$ as $Y\to\infty$. The integral on the right is absolutely and locally uniformly convergent in $z$ on the half-plane $\Re e(z)>0$:  More precisely, if $K_0$ is a compact subset of this half-plane, then there exists $\delta>0$ such that $\Re e(w)\ge\delta$ for all $w\in K_0$, and
$$|Y^{-w-1}\sin(uY)| \le Y^{-1-\delta}, \qquad w\in K_0,
$$
and $Y^{-1-\delta}$ is integrable on $[1,\infty)$.  Hence, for fixed $u>0$, $z\mapsto \int_1^\infty Y^{-z}\cos(uY)\,dY$ is in fact holomorphic on $\Re e(z)>0$. The meromorphic expression obtained by the right hand side of \eqref{eq:KA} above agrees with this holomorphic function on the nonempty open strip $0<\Re e(z)<1$, whence by the uniqueness of analytic continuation, the two functions must in fact agree on
$$
\{z\in\C\,| \,\Re e(z)>0\,\}\setminus\{2m+1, m\geq 0\}.
$$
But since $z\mapsto \int_1^\infty Y^{-z}\cos(uY)\,dY$ is holomorphic throughout $\Re e(z)>0$, every apparent singularity of the right hand side of \eqref{eq:KA} at
$z\in \{2m+1, m\geq 0\}$ must vanish in the difference of \eqref{eq:KA1} and \eqref{eq:KA2}. As these were the only possible poles of the right hand side of \eqref{eq:KA} it provides an analytic continuation of $z\mapsto \int_1^\infty Y^{-z}\cos(uY)\,dY$ to an entire function in $z\in\C$ as claimed.\\\\
We may now finish the proof of Prop.\ \ref{prop:HA}: For $\Re e(z)>1$, Fubini applies to the integrals defining $H_A(z)$, because $q_A\in L^1((0,1))$ and, as we have just seen, $\int_1^\infty Y^{-z}\cos(uY)\,dY$ is absolutely convergent. We get
$$
H_A(z)=-\int_0^1q_A(u)\int_1^\infty Y^{-z}\cos(uY)\,dY\,du
$$
and may hence insert the continued expression provided by \eqref{eq:KA}. This yields \eqref{eq:HAtrans} for $\Re e(z)>1$ and hence meromorphic continuation of $H_A(z)$ by Lem.\ \ref{lem:MA}, where we observe that since
$$|M_A(2k)| \leq \int_0^1 |q_A(u)|\,du =\|q_A\|_1<\infty,$$
the series 
$$\sum_{k=0}^{\infty}\frac{(-1)^k M_A(2k)}{(2k)!(2k+1-z)}$$
converges absolutely and locally uniformly on $\mathbb C\setminus\{2m+1, m\geq 0\}$. The last assertion implies furthermore that the series $\sum_{k=0}^{\infty}\frac{(-1)^k M_A(2k)}{(2k)!(2k+1-z)}$ is in fact holomorphic on the whole critical strip and hence on a neighbourhood of every non-trivial zero $\rho$ of $\zeta(s)$. Hence, in order to establish also the last claim about the poles of the right hand side of \eqref{eq:HAtrans}, it suffices to show that the remaining summand $-\Gamma(1-z)\sin\left(\frac{\pi z}{2}\right)\,M_A(z-1)$ in \eqref{eq:HAtrans} has a pole at every non-trivial zero $\rho$ of $\zeta(s)$. But this easily follows, since by Lem.\ \ref{lem:MA}, the function $M_A(z-1)$ has a pole at every non-trivial zero $\rho$ of $\zeta(s)$, whereas $\Gamma(1-\rho)\sin\left(\frac{\pi \rho}{2}\right)\neq 0$.
\end{proof}

\subsection{The auxiliary function $P_A$ and its growth}
Now let $A,C>0$ be arbitrary, but fixed, and define for $Y\in\R$, $Y>1$:
\begin{equation}\label{eq:PAdef}
P_A(Y):=\int_1^Y\log(yC^{-1})\,f_A(y)\,dy.
\end{equation}

\begin{proposition}\label{prop:Omega}
Let $\rho$ be a non-trivial zero of $\zeta(s)$, and put $\beta=\Re e(\rho)$. Then
$$
P_A(Y)\neq O(Y^\theta)
$$
for every $\theta<\beta$.
\end{proposition}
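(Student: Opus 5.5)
Argue by contradiction: assume $P_A(Y)=O(Y^\theta)$ for some $\theta<\beta$ and use the Mellin transform of $P_A$ to show that $H_A$ would then be holomorphic on a half-plane containing $\rho$. This contradicts Prop.\ \ref{prop:HA}.

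The link between $P_A$ and $H_A$ comes from integrating by parts. Since $P_A'(y)=\log(yC^{-1})f_A(y)$ and $P_A(1)=0$, we have for $\Re e(z)>\max(\theta,0)$
$$
\int_1^\infty P_A'(y)\,y^{-z}\,dy = z\int_1^\infty P_A(y)\,y^{-z-1}\,dy=:z\,G(z).
$$
The boundary term vanishes because $P_A(Y)Y^{-z}\to0$. By the assumed bound, $G$ is holomorphic on $\Re e(z)>\theta$. On the other hand, for $\Re e(z)>1$ the left-hand side equals
$$
\int_1^\infty \log(y)f_A(y)y^{-z}\,dy-\log(C)\,H_A(z)=-H_A'(z)-\log(C)\,H_A(z).
$$
Here differentiation under the integral is justified by $|f_A|\le\|q_A\|_1$ and absolute convergence for $\Re e(z)>1$. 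Hence, for $\Re e(z)>1$, $H_A$ solves the linear ODE
$$
H_A'(z)+\log(C)\,H_A(z)=-zG(z).
$$

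Solving this ODE gives
$$
\frac{d}{dz}\bigl(C^{z}H_A(z)\bigr)=-C^{z}zG(z).
$$
The right-hand side is holomorphic on the half-plane $\Re e(z)>\max(\theta,0)$, which is simply connected. So $C^zH_A(z)$ equals a primitive there, holomorphic on all of that half-plane, up to an additive constant fixed on $\Re e(z)>1$. Thus $C^zH_A(z)$, and hence $H_A$, extends holomorphically to $\Re e(z)>\max(\theta,0)$. By uniqueness of meromorphic continuation, this extension agrees with the continuation of Prop.\ \ref{prop:HA}.

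Now $\rho$ satisfies $\Re e(\rho)=\beta>\theta$ and $\beta>0$ (non-trivial zeros lie in the open strip), so $\rho$ lies in this half-plane. But Prop.\ \ref{prop:HA} says $H_A$ has a pole at $\rho$, a contradiction.

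The main obstacle is the integration by parts step near $\Re e(z)\le 1$. One must only use the identity on $\Re e(z)>1$, where $f_A$ bounded makes everything absolutely convergent, and push the holomorphy leftward solely through $G$. The point is that the logarithmic weight turns $H_A$ into the derivative-type expression $H_A'+\log(C)H_A$, so one integrates the ODE rather than reading off $H_A$ directly. The $\log C$ term is harmless because of the factor $C^z$. If $\theta<0$, replace it by a small positive $\theta'<\beta$, since $O(Y^\theta)\subset O(Y^{\theta'})$.
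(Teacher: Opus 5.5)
Your proof is correct and follows the paper's argument: the same identity $\int_1^\infty \log(YC^{-1})f_A(Y)Y^{-z}\,dY=-H_A'(z)-(\log C)H_A(z)=z\int_1^\infty P_A(Y)Y^{-z-1}\,dY$ on $\Re e(z)>1$, holomorphy of the right-hand side on $\Re e(z)>\theta$ under the assumed bound, and a contradiction with the pole of $H_A$ at $\rho$ from Prop.\ \ref{prop:HA}. The only difference is the last step, and it is equivalent. The paper argues locally that a pole of order $r$ of $H_A$ forces a pole of order $r+1$ of $H_A'+(\log C)H_A$. You instead use the integrating factor $C^z$ and a primitive on the half-plane to show that $H_A$ itself would extend holomorphically.
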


\begin{proof}
For $\Re e(z)>1$ consider
$$
\int_1^\infty \log(YC^{-1}))\,f_A(Y)Y^{-z}\,dY.
$$
Since $f_A$ is bounded, differentiation under the integral sign is justified locally uniformly for $\Re z>1$: Indeed, on every compact subset of this half-plane the differentiated integrand is dominated by $O((\log Y)Y^{-1-\delta})$ for some $\delta>0$. Hence
$$
H_A'(z) = -\int_1^\infty (\log Y)f_A(Y)Y^{-z}\,dY.
$$
Therefore,
$$
\int_1^\infty \log(YC^{-1})\,f_A(Y)Y^{-z}\,dY = -H_A'(z)-(\log C)H_A(z).
$$
By Prop.\ \ref{prop:HA}, $H_A(z)$ has a pole at every non-trivial zero $\rho$ of $\zeta(s)$. If this pole has order $r\ge1$, say, then $-H_A'(z)$ has a pole of
order $r+1$ (as differentiation pushes the smallest exponent in the Laurent series of $H_A(z)$ down by $1$), whereas $-(\log C)H_A$ has a pole of order at most $r$.  Hence, these poles cannot cancel, and $\int_1^\infty \log(YC^{-1})\,f_A(Y)Y^{-z}\,dY$ has a pole at $\rho$.\\\\ 
Now suppose, for contradiction, that $P_A(Y)=O(Y^\theta)$ for some $\theta<\Re e(\rho)$. Since $P'_A(Y)=\log(YC^{-1})f_A(Y),$ partial integration yields, initially for $\Re e(z)>1$,
$$
\int_1^\infty \log(YC^{-1})\,f_A(Y)Y^{-z}\,dY=z\int_1^\infty P_A(Y)Y^{-z-1}\,dY.
$$
On the other hand, under the assumption $P_A(Y)=O(Y^\theta)$, the function
$$
z\longmapsto z\int_1^\infty P_A(Y)Y^{-z-1}\,dY
$$
is holomorphic in the half-plane $\Re e(z)>\theta$: Indeed, on every compact subset of this half-plane there exists $\delta>0$ such that the integrand is bounded, for large $Y$, by a constant multiple of $Y^{-1-\delta}$. The integral therefore converges absolutely and locally uniformly in $z$, yielding the claimed holomorphy for $\Re e(z)>\theta$.  But since $\int_1^\infty \log(YC^{-1})\,f_A(Y)Y^{-z}\,dY=z\int_1^\infty P_A(Y)Y^{-z-1}\,dY$, this would give a holomorphic continuation of $\int_1^\infty \log(YC^{-1})\,f_A(Y)Y^{-z}\,dY$ to $\Re e(z)>\theta$. However, as $\Re e(\rho)=\beta>\theta$, this contradicts the existence of the pole of $\int_1^\infty \log(YC^{-1})\,f_A(Y)Y^{-z}\,dY$ at $z=\rho$.
\end{proof}

\section{Contour integrals}

\subsection{A decomposition into line integrals} We shall make use of the following notion: If a real number $T>0$ is such $|\Im m(\rho)|=T$ for some zero $\rho$ of $\zeta(s)$, then we shall say that $T$ {\it is a zero ordinate for $\zeta(s)$}. Now assume that real numbers $x,\sigma\in\R$ with $x>1$ are given. If $T>0$ is not a zero ordinate, we define
$$
I_\sigma(T):=\frac1{2\pi i}\int_{\sigma-iT}^{\sigma+iT}\ERi(\log(x) s)\left(-\frac{\zeta'(s)}{\zeta(s)}\right)\,ds,
$$
whenever the integrand has no pole along the line connecting the endpoints $\sigma-iT$ and $\sigma+iT$. Also, for any fixed choice of $1<c\le2$ and $0<a<1$ let
$$H_{a,c}(T):= \frac{1}{2\pi i} \int_{c+iT}^{-a+iT} \ERi(\log(x) s)\left(-\frac{\zeta'(s)}{\zeta(s)}\right)\,ds+ \frac{1}{2\pi i}\int_{-a-iT}^{c-iT} \ERi(\log(x) s)\left(-\frac{\zeta'(s)}{\zeta(s)}\right)\,ds
$$
be the contribution of the two horizontal sides of the rectangle $\Delta_T(a,c)$ with vertices $-a\pm iT$ and $c\pm iT$ to the contour integral of $\ERi(\log(x) s)\left(-\frac{\zeta'(s)}{\zeta(s)}\right)$ along $\Delta_T(a,c)$. Observe that by the very choice of $a$ and $c$, the sides of $\Delta_T(a,c)$ do not cross any possible pole of the integrand. The reason for introducing this notation becomes clear after the following

\begin{proposition}\label{lem:residue}
Let $x\in\R$, $x>1$. Let $T>0$ be such that no zero $\rho$ of $\zeta(s)$ has imaginary part
$|\Im m(\rho)|= T$, i.e., $T$ is not a zero ordinate. Then, for any arbitrary, but fixed choice of $1<c\le2$ and $0<a<1$
$$
\ERi(\log(x))-\Sigma R_T(x)=\frac{1}{2\pi i}\oint_{\Delta_T(a,c)} \ERi(\log(x) s)\left(-\frac{\zeta'(s)}{\zeta(s)}\right)\,ds.
$$
\end{proposition}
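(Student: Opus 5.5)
This is a direct application of the residue theorem to the meromorphic function
$$F(s):=\ERi(\log(x)s)\left(-\frac{\zeta'(s)}{\zeta(s)}\right)$$
on the positively oriented rectangle $\Delta_T(a,c)$. Since $\ERi$ is entire (see \eqref{eq:ERI}), every pole of $F$ comes from $-\zeta'/\zeta$. These poles are the pole of $\zeta$ at $s=1$ and the zeros of $\zeta$.

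I would proceed in three steps.

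\textbf{Step 1: locate the poles inside the rectangle.} The interior of $\Delta_T(a,c)$ is $\{-a<\Re e(s)<c,\ |\Im m(s)|<T\}$. Because $0<a<1$, it contains no trivial zero: the first one, $-2$, lies to the left of $-a$. Because $c>1$, it contains $s=1$. It also contains exactly those non-trivial zeros $\rho$ with $|\Im m(\rho)|<T$, since these lie in the closed critical strip $0\le\Re e(\rho)\le1\subset(-a,c)$.

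\textbf{Step 2: check that no pole lies on the boundary.} The vertical sides $\Re e(s)=c>1$ and $\Re e(s)=-a\in(-1,0)$ avoid $s=1$. They also avoid all zeros: $\zeta$ has no zeros for $\Re e(s)>1$, and none in $-1<\Re e(s)<0$, since the trivial zeros are at negative even integers and the non-trivial ones lie in $0\le\Re e\le1$. The horizontal sides at height $\pm T$ avoid all zeros because $T$ is not a zero ordinate. They avoid $s=1$ because it is real and $T>0$. Finally, every non-trivial zero has nonzero imaginary part, since $\zeta$ has no zeros in $[0,1]$. Hence the condition $|\Im m(\rho)|<T$ from Step 1 is the same as $0<|\Im m(\rho)|\le T$, which is the range of summation in $\Sigma R_T(x)$.

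\textbf{Step 3: compute the residues.} The function $-\zeta'/\zeta$ has a simple pole with residue $+1$ at the simple pole $s=1$ of $\zeta$. At a zero $\rho$ of multiplicity $m_\rho$ it has a simple pole with residue $-m_\rho$. Since $\ERi(\log(x)s)$ is holomorphic, multiplying by it gives
$$\operatorname{Res}_{s=1}F=\ERi(\log x), \qquad \operatorname{Res}_{s=\rho}F=-m_\rho\,\ERi(\rho\log x).$$
By the residue theorem,
$$\frac{1}{2\pi i}\oint_{\Delta_T(a,c)}F(s)\,ds=\ERi(\log x)-\sum_{0<|\Im m(\rho)|\le T}m_\rho\,\ERi(\rho\log x)=\ERi(\log x)-\Sigma R_T(x),$$
which is the statement.

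Nothing here is a real obstacle; the only point needing care is Step 2. In particular one must note that $1<c\le2$ keeps $s=1$ inside the rectangle, and that $0<a<1$ excludes the trivial zero $-2$.
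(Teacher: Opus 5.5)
Your proof is correct and follows the same route as the paper. The paper likewise notes that $\ERi(\log(x)s)$ is entire, so the only poles inside $\Delta_T(a,c)$ are $s=1$ and the non-trivial zeros with $|\Im m(\rho)|<T$; it computes the same residues $\ERi(\log x)$ and $-m_\rho\ERi(\rho\log x)$ and applies the residue theorem. Your Step 2 (no poles on the boundary, and $|\Im m(\rho)|<T$ being equivalent to $0<|\Im m(\rho)|\le T$) spells out checks the paper leaves implicit.
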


\begin{proof}
Since $\ERi(\log(x) s)$ is entire, the only possible singularities of the integrand inside $\Delta_T(a,c)$ occur at $s=1$, arising from the pole of $\zeta(s)$ at $s=1$, and at the non-trivial zeros $\rho$ of $\zeta(s)$ with $|\Im m(\rho)|<T$, which are poles of $-\zeta'(s)/\zeta(s)$. The respective residues are
$$
\operatorname{Res}_{s=1} \ERi(\log(x)s)\left(-\frac{\zeta'(s)}{\zeta(s)}\right)=\ERi(\log(x))
$$
and, at a zero $\rho$ of multiplicity $m_\rho$,
$$
\operatorname{Res}_{s=\rho} \ERi(\log(x)s)\left(-\frac{\zeta'(s)}{\zeta(s)}\right) =-m_\rho\ERi(\log(x)\rho).
$$
The claim now follows from the residue theorem.
\end{proof}
\noindent Consequently, under the assumptions of Prop.\ \ref{lem:residue}, 
\begin{equation}\label{eq:contour}
\ERi(\log(x))-\Sigma R_T(x)= I_c(T)-I_{-a}(T)+H_{a,c}(T).
\end{equation}
In the following subsection, we will analyze this decomposition term by term.

\subsection{The horizontal contribution}
We start off analyzing the horizontal term in \eqref{eq:contour}. To this end, we shall need the following

\begin{lemma}\label{lem:decay}
For every compact interval $J\subset\mathbb R$ and every $B>0$,
$$
\ERi(\log(x)(\sigma\pm iT))
\ll_{B,J,\log(x)}(\log(T))^{-B}
$$
uniformly for $\sigma\in J$.
\end{lemma}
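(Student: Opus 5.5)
The plan is to work directly from the kernel representation of Prop.\ \ref{prop:kernel}. Put $L:=\log(x)>0$ and $z=L(\sigma\pm iT)$. Then
$$
\ERi(z)=-\int_0^1 Q(u)\,e^{L\sigma u}\,e^{\pm iLTu}\,du ,
$$
so the lemma is a decay statement for the Fourier transform of the $L^1$-function $g_\sigma(u):=Q(u)e^{L\sigma u}$. Since $e^{L\sigma u}\le e^{L\max_{\sigma\in J}|\sigma|}$ for $u\in[0,1]$, with derivative bounded in the same way, every constant below will be uniform in $\sigma\in J$. I would split the integral at a point $\delta=\delta(T)\in(0,1)$ and treat $[0,\delta]$ and $[\delta,1]$ separately.

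\emph{The part near $0$.} This piece is bounded trivially by $e^{L\max|J|}\int_0^\delta|Q(u)|\,du=e^{L\max|J|}\int_{1/\delta}^\infty |B(y)|\,y^{-1}\,dy$. The bound $B(y)\ll(\log y)^{-2}$ quoted from \cite{MontgomeryVaughan} is not sufficient for an arbitrary $B$. Instead I would use the stronger prime-number-theorem-type estimate $B(y)\ll_C(\log y)^{-C}$ for every $C>0$; this follows from the de la Vall\'ee Poussin zero-free region, which even gives $B(y)\ll\exp(-c\sqrt{\log y})$, and it is in \cite{MontgomeryVaughan} as well. Taking $C=B+1$ then gives
$$
\int_{1/\delta}^\infty |B(y)|\,y^{-1}\,dy\ll_B(\log(1/\delta))^{-B}.
$$
Securing this stronger input on $B(y)$ is the only non-elementary step, and I expect it to be the main obstacle: everything else is a routine oscillatory-integral estimate.

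\emph{The part away from $0$.} On $[\delta,1]$ the function $Q(u)=u^{-1}B(u^{-1})$ has bounded variation, and I would estimate that variation explicitly. The factor $u^{-1}$ is monotone, and $|B|$ is bounded, which contributes $O(\delta^{-1})$. The function $B(u^{-1})$ is a step function with jumps of size $1/n$ at $u=1/n$ for $n\le 1/\delta$; multiplied by $u^{-1}=n$ at those points, the jumps contribute $\sum_{n\le1/\delta}1\le\delta^{-1}$. Hence $V_{[\delta,1]}(g_\sigma)\ll_J\delta^{-1}$, and the same holds for $\sup_{[\delta,1]}|g_\sigma|$. A Riemann--Stieltjes integration by parts against $e^{\pm iLTu}$ then yields
$$
\Bigl|\int_\delta^1 g_\sigma(u)e^{\pm iLTu}\,du\Bigr|\le\frac{2\sup|g_\sigma|+V_{[\delta,1]}(g_\sigma)}{LT}\ll_{J,L}\frac{1}{\delta T}.
$$

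Finally I would choose $\delta=T^{-1/2}$. The two bounds then combine to
$$
\ERi(L(\sigma\pm iT))\ll_{B,J,L}T^{-1/2}+(\tfrac12\log T)^{-B}\ll(\log T)^{-B},
$$
uniformly in $\sigma\in J$, which is the assertion of the lemma.
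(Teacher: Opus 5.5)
Your argument is correct, and it takes a genuinely different route from the paper.

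\textbf{The paper's route.} The paper works from the series definition of $\ERi$. It imports Elliott's truncated asymptotic $\ERi(z)=\sum_{n\le|z|}\frac{\mu(n)}{n}\Ei(z/n)+O_B((\log|z|)^{-B})$ (his Prop.~4.5.2) together with his sectorial bound $\Ei(z/n)\ll n/|z|$ (Prop.~4.4.1). It then splits the finite sum at $|z|^{1/2}$ and treats the range $|z|^{1/2}<n\le|z|$ by partial summation against $M(t)\ll_D t(\log t)^{-D}$.

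\textbf{Your route.} You stay inside the paper's own framework, namely Prop.~\ref{prop:kernel}, and prove a quantitative Riemann--Lebesgue estimate. The piece over $[0,\delta]$ is controlled by the tail $\int_{1/\delta}^\infty|B(y)|\,y^{-1}\,dy$. The piece over $[\delta,1]$ is controlled by the total variation $O(\delta^{-1})$ of $Q$ there. The choice $\delta=T^{-1/2}$ balances the two. I checked the steps: the jump count, the variation of $u^{-1}B(u^{-1})$, the Stieltjes integration by parts, and the tail integral $(\log(1/\delta))^{1-C}/(C-1)$. They are all fine.

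\textbf{Arithmetic input.} It has the same strength in both proofs. Your bound $B(y)\ll_C(\log y)^{-C}$ follows from the paper's Mertens bound by partial summation and $\sum_n\mu(n)/n=0$. You are also right that the $(\log y)^{-2}$ bound quoted earlier in the paper would only give $(\log T)^{-1}$ here.

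\textbf{Comparison.} Your route removes the dependence on Elliott's two external results. It makes uniformity in $\sigma\in J$ and in the sign $\pm$ immediate. It also shows that the decay of $\ERi$ on vertical lines is governed exactly by the tail of $B$. For instance, feeding in $B(y)\ll\exp(-c\sqrt{\log y})$ gives $\ERi(\log(x)(\sigma\pm iT))\ll\exp(-c'\sqrt{\log T})$. Your argument also fits the rest of the paper, which uses the kernel representation throughout. The paper's route, in exchange, stays with the defining series of $\Ei$-values and reuses asymptotics for $\ERi$ that are already available in the literature.
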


\begin{proof}
Using \cite{Elliott2025}, Prop.\ 4.5.2 (set $c=1$ {\it ibidem}), we obtain 
\begin{equation}\label{eq:remainder}
\ERi(z)=\sum_{n\le |z|}\frac{\mu(n)}n\Ei\left(\frac{z}{n}\right)
+O_B((\log|z|)^{-B}),
\end{equation}
which holds uniformly in any fixed vertical strip $a_1<\Re e(z)<a_2$ for $|\Im m(z)|\to\infty$. Now split the finite sum at $|z|^{1/2}$. For $n\le |z|^{1/2}$, the sectorial estimate provided by \cite{Elliott2025}, Prop.\ 4.4.1 gives
$$
\Ei\left(\frac{z}{n}\right)\ll n/|z|,
$$
so that these terms contribute an $O(|z|^{-1/2})$. For $|z|^{1/2}<n\le|z|$, put formally $g(t):=t^{-1}\Ei\left(\frac{z}{t}\right)$. Then, uniformly in our fixed
strip,
$$
g(t)\ll t^{-1},\qquad {\rm and}\qquad g'(t)= -\frac{\Ei\left(\frac{z}{t}\right)+e^{z/t}}{t^2} \ll t^{-2}.
$$
Choose a number $D>B+1$. From the classical bound for the Mertens function, see, e.g., \cite{MontgomeryVaughan}, (6.17), one deduces {\it a fortiori} that
$$
M(t):=\sum_{n\le t}\mu(n)
\ll_D t(\log(t))^{-D}.
$$
Partial summation therefore yields
$$
\begin{aligned}
\sum_{|z|^{1/2}<n\le |z|} \frac{\mu(n)}n\Ei\left(\frac{z}{n}\right) = \sum_{|z|^{1/2}<n\le |z|}\mu(n)g(n)
&=M(|z|)g(|z|) -M(|z|^{1/2})g(|z|^{1/2}) -\int_{|z|^{1/2}}^{|z|}M(t)g'(t)\,dt\\
&\ll_D (\log(|z|^{1/2}))^{-D} +(\log(|z|))^{-D} +\int_{|z|^{1/2}}^{|z|} \frac{dt}{t(\log(t))^D}\\
&\ll_D (\log(|z|^{1/2}))^{1-D},
\end{aligned}
$$
which is $O_B((\log(|z|))^{-B})$. Together with the estimate for the sum over $n\le |z|^{1/2}$ and the remainder term from \eqref{eq:remainder} we obtain, uniformly in every fixed vertical strip,
$$
\ERi(z)\ll_B(\log|z|)^{-B}
\qquad (|\Im m(z)|\to\infty).
$$
Finally take $z=\log(x)(\sigma\pm iT)$. For $\sigma\in J$, the real part $\log(x)\sigma$ ranges over the fixed compact interval $\log(x)J$, while
$$
c_1T\leq |\log(x)(\sigma\pm iT)| \leq c_2T
$$
uniformly in $\sigma$, for constants $c_1,c_2>0$ depending only on $\log(x)$ and $J$. Hence,
$$\ERi(\log(x)(\sigma\pm iT)) \ll_{B,J,\log(x)}(\log(T))^{-B},$$
as claimed.
\end{proof}

\begin{lemma}\label{lem:good}
For every sufficiently large $T$ there exists $T'\in[T,T+1]$, which is not a zero ordinate, such that
$$
\frac{\zeta'(\sigma\pm iT')}{\zeta(\sigma\pm iT')}=O((\log(T'))^2)
$$
uniformly for $-1\le\sigma\le2$.  Moreover, if $T'=T'(T)$ is any such choice made in dependence of $T$, and $1<c\le2$ and $0<a<1$ are arbitrary, but fixed, then
$$
H_{a,c}(T')=o(1)\qquad(T\to\infty).
$$
\end{lemma}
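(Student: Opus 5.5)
The plan is to run the classical argument (Davenport, Ch.~17; Montgomery--Vaughan, Lemma 12.2) for the existence of good ordinates, and then combine it with Lem.~\ref{lem:decay}.

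\emph{Choosing $T'$.} The first step is the standard partial fraction expansion. For $-1\le\sigma\le2$ and $|t|$ large,
$$
\frac{\zeta'(s)}{\zeta(s)}=\sum_{|\gamma-t|\le1}\frac{m_\rho}{s-\rho}+O(\log|t|),
$$
with $s=\sigma+it$ and the sum taken over non-trivial zeros $\rho=\beta+i\gamma$. This comes from the Hadamard product together with the Riemann--von Mangoldt bound
$$
N(t+1)-N(t)\ll\log t.
$$
For $\sigma$ slightly negative, say $-1\le\sigma<0$, there are two options. One is to use the functional equation to transport the estimate from $1-\sigma$. The other is to note directly that the Hadamard expansion still holds with the extra $\Gamma'/\Gamma$ term, which is $O(\log t)$ away from the trivial zeros. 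Either way the same expansion is valid on the whole range $-1\le\sigma\le2$.

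Next, the interval $[T,T+1]$ contains $\ll\log T$ zero ordinates, counted with multiplicity. Dividing it into $\gg\log T$ subintervals of equal length, some subinterval of length $\gg1/\log T$ contains no ordinate, and I pick $T'$ at its midpoint. Then
$$
|\gamma-T'|\gg1/\log T
$$
for every zero. Each of the $O(\log T)$ terms in the local sum is therefore $O(\log T)$, and uniformly in $\sigma\in[-1,2]$ we get
$$
\frac{\zeta'}{\zeta}(\sigma\pm iT')\ll(\log T)^2\ll(\log T')^2.
$$
The choice is symmetric in $\pm$ because the zeros are symmetric under conjugation. By construction $T'$ is not a zero ordinate.

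\emph{Horizontal contribution.} Each horizontal side has length $a+c\le3$. On it, Lem.~\ref{lem:decay} with $J=[-1,2]$ and $B=3$ gives
$$
\ERi(\log(x)(\sigma\pm iT'))\ll(\log T')^{-3}.
$$
Combining this with the bound on $\zeta'/\zeta$ from the first step,
$$
|H_{a,c}(T')|\ll\frac{1}{2\pi}\cdot 2\cdot 3\cdot(\log T')^{-3}(\log T')^{2}\ll(\log T)^{-1}=o(1).
$$
This bound holds for any $T'$ satisfying the first part of the lemma, not only for the specific construction above, which gives the ``moreover'' statement.

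\emph{Main obstacle.} The delicate point is the uniformity of the $\zeta'/\zeta$ bound down to $\sigma=-1$, which lies left of the critical strip, rather than only on $-1\le\sigma\le2$ restricted to the strip. I would handle it by differentiating the logarithm of the functional equation,
$$
\frac{\zeta'}{\zeta}(s)=\log 2\pi+\frac{\pi}{2}\cot\frac{\pi s}{2}-\frac{\Gamma'}{\Gamma}(1-s)-\frac{\zeta'}{\zeta}(1-s).
$$
For $|t|$ large the cotangent term is bounded, and $\Gamma'/\Gamma(1-s)\ll\log|t|$. The last term has $1-\sigma\in[1,2]$, which falls under the previous estimate, and it is evaluated at ordinate $\mp T'$, which is still a good ordinate because the zeros are symmetric. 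Everything else in the argument is routine.
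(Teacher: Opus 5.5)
Your proof is correct and takes the same route as the paper. The paper simply cites Montgomery--Vaughan, Lem.~12.2 for the good ordinate $T'$; you reproduce its standard proof, which already holds uniformly on $-1\le\sigma\le2$, so your functional-equation detour is harmless but unnecessary. The paper then multiplies the $(\log T')^{-B}$ bound from Lem.~\ref{lem:decay} by $(\log T')^{2}$ over segments of bounded length exactly as you do, and your $B=3$ with $J=[-1,2]$ works as well as its $B>3$ with $J=[-a,c]$, since any $B>2$ suffices.
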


\begin{proof}
For $s=\sigma+ iT'$ the first assertion is \cite{MontgomeryVaughan}, Lem.\ 12.2, where we recall that $T'=T+O(1)$, whence the asserted bound is also $O((\log(T'))^2)$. We remark that, by complex conjugation, the same bound holds also on the lower horizontal side (i.e., for $s=\sigma- iT'$).\\\\ 
Now, on either horizontal side of our rectangle $\Delta_{T'}(a,c)$, Lem.\ \ref{lem:decay} gives, for $J:=[-a,c]$ and any fixed $B>3$,
$$
\ERi(\log(x)(\sigma\pm iT')) \ll_{B,[-a,c],\log(x)}(\log(T'))^{-B}
$$
uniformly in $-a\leq\sigma\leq c$. Multiplication by the preceding $O((\log(T'))^2)$-bound for
$$
\frac{\zeta'(\sigma\pm iT')}{\zeta(\sigma\pm iT')}
$$
and integration over segments of fixed length gives
$$
H_{a,c}(T') \ll_{B,[-a,c],\log(x)}(\log(T'))^{2-B}.
$$
Since $B>3$, this implies that $H_{a,c}(T')=o(1)$ as $T\to\infty$, for $T'=T'(T)$ as above.
\end{proof}

\subsection{The right vertical contribution and the prime-counting function $\pi_0(x)$}
We now prove the following fundamental identity for the averaged prime-counting function

\begin{proposition}\label{lem:right}
For $x\in\R$, $x>1$ and an arbitrary, but fixed $1<c\le2$, 
$$
\pi_0(x)=\displaystyle \lim_{T\to\infty}I_c(T).
$$
\end{proposition}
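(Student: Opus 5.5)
We will prove that $\lim_{T\to\infty}I_c(T)=\pi_0(x)$ by inserting the kernel representation of Prop.\ \ref{prop:kernel} into the integrand and exchanging integrals. This reduces the claim to Perron's formula for the von Mangoldt function, evaluated at the points $x^u$ for $u\in(0,1]$.

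On the line $\Re e(s)=c>1$ we have $-\zeta'(s)/\zeta(s)=\sum_{m\ge2}\Lambda(m)m^{-s}$, and Prop.\ \ref{prop:kernel} gives $\ERi(\log(x)s)=-\int_0^1Q(u)x^{us}\,du$. Since the segment $[c-iT,c+iT]$ is compact and $Q\in L^1((0,1))$, Fubini applies for fixed $T$. This yields
$$
I_c(T)=-\int_0^1 Q(u)\,J_T(x^u)\,du,\qquad J_T(y):=\frac{1}{2\pi i}\int_{c-iT}^{c+iT}\left(-\frac{\zeta'(s)}{\zeta(s)}\right)y^{s}\,ds .
$$
The classical truncated Perron formula for $\psi$ (cf.\ \cite{MontgomeryVaughan}, Ch.\ 5 or 12) gives the following for $y>1$:
$$
\frac{J_T(y)}{y}-\psi_0(y)=O\Big(\sum_{m}\Lambda(m)\Big(\frac ym\Big)^{c}\min\Big(1,\frac{1}{T|\log(y/m)|}\Big)\Big).
$$
Hence $J_T(y)/y\to\psi_0(y)$ pointwise, where $\psi_0$ is $\psi$ averaged at the jumps. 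The normalisation must be handled with care: the integrand carries $x^{us}$, not $x^{us}/s$. We therefore rewrite the integral as $\frac{1}{2\pi i}\int (-\zeta'/\zeta)(s)\,\frac{y^s}{s}\,s\,ds$, or, more cleanly, we integrate in $u$ first. Writing $e^{zu}$ with $z=s\log x$, the natural object is $\int_0^1Q(u)x^{us}du$, whose primitive in $s$ produces the factor $1/s$. So the second step is an integration by parts in $u$: set $\widetilde Q(u):=\int_0^u Q(v)\,dv$, which is continuous and equals $-1$ at $u=1$ by Lem.\ \ref{lem:MellinQ}. Then
$$
-\int_0^1Q(u)x^{us}du=x^{s}-s\log x\int_0^1\widetilde Q(u)x^{us}\,du .
$$
Both pieces now contain exactly the Perron kernel $y^s/s$ after dividing and multiplying by $s$. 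Concretely, we apply Perron to $\frac{1}{2\pi i}\int(-\zeta'/\zeta)(s)\,y^s\,\frac{ds}{s}\to\psi_0(y)$ with $y=x^u$.

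The third step is to take $T\to\infty$ under the $du$-integral. We need a uniform bound for the Perron error, integrated against $|\widetilde Q(u)|$ or $|Q(u)|$. The error term is bounded uniformly in $u\in[0,1]$ apart from a logarithmic singularity near the finitely many $u$ with $x^u$ a prime power. That singularity is integrable, and its integral tends to $0$ as $T\to\infty$, since $\int\min(1,1/(T|\log(x^u/m)|))\,du\ll (\log T)/T$. Dominated convergence then gives a limit of the form $\int_0^1 Q(u)\,(\text{step function built from }\psi_0(x^u))\,du$. Expanding $\psi_0(x^u)=\sum_{m}\Lambda(m)\,\mathbf 1_{m\le x^u}$, where the prime powers $m=p^k$ correspond to $u\ge \log m/\log x$, each term produces $\int_{\log m/\log x}^1 Q(u)\,du$. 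After the substitution $y=1/u$ this is expressed through $\sum_{n}\frac{\mu(n)}{n}$ truncations, and Möbius inversion $\pi_0(x)=\sum_n\frac{\mu(n)}{n}\Pi_0(x^{1/n})$, with $\Pi_0(t)=\sum_{p^k\le t}\frac1k$, should collapse the sum to $\pi_0(x)$. Because only finitely many $m\le x$ occur, all these sums are finite.

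\textbf{Main obstacle.} The bookkeeping in this last identification is where the difficulty lies: getting the exact factor $1/s$ (or equivalently $\Lambda(m)/\log m=1/k$) out of the $u$-integral so that $\psi$ turns into $\Pi$. Our first attempt is to compute directly, for each fixed $m$,
$$
\lim_{T\to\infty}\frac{1}{2\pi i}\int_{c-iT}^{c+iT}\ERi(s\log x)\,m^{-s}ds
$$
via the kernel, and to check that it equals $\frac1{\log m}\sum_{n\le \log x/\log m}\mu(n)/n$ (with the halving convention at equality). We then sum over $m$; only $m\le x$ contribute, so the limit and the sum interchange trivially once the tail $m>x$ is shown to vanish uniformly, using $|\ERi|$ decay from Lem.\ \ref{lem:decay} together with the standard Perron tail estimate.
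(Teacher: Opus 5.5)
There is a genuine gap: the argument you actually run in Steps 2--3 fails, and the route that works appears only as an unexecuted fallback.

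First, $J_T(y)/y\to\psi_0(y)$ is false. Without the factor $1/s$ you have $J_T(y)=\sum_m\Lambda(m)(y/m)^c\frac{\sin(T\log(y/m))}{\pi\log(y/m)}$, which is a Dirichlet-kernel sum. It oscillates almost periodically in $T$, grows like $\Lambda(y)T/\pi$ at prime powers, and converges only as a distribution in $\log y$, to point masses at the $\log m$. The error term you quote belongs to $\frac{1}{2\pi i}\int_{c-iT}^{c+iT}(-\zeta'/\zeta)(s)\,y^s\,\frac{ds}{s}$.

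Your repair goes the wrong way. Integrating by parts against $\widetilde Q$ differentiates $x^{us}$ and produces a factor $s$, not $1/s$. The sign is also off: $-\int_0^1Q(u)x^{us}\,du=x^s+s\log x\int_0^1\widetilde Q(u)x^{us}\,du$. After ``multiplying and dividing by $s$'' you are left with $s\cdot y^s/s$ and $s^2\cdot y^s/s$, which are not Perron kernels and have no pointwise limit. To gain $1/s$ you would have to integrate $x^{us}$ and put the derivative on $Q$, as a Stieltjes measure on $[\delta,1]$. So Step 3 has nothing to apply dominated convergence to. Moreover, the expression you arrive at, $\sum_{m\le x}\Lambda(m)\int_{u_m}^1Q(u)\,du$ with $u_m=\log m/\log x$, carries the wrong weights: it equals $-\sum_{m\le x}\Lambda(m)\sum_{n\le 1/u_m}\frac{\mu(n)}{n}\log\frac{1}{nu_m}$ and does not collapse to $\pi_0(x)$.

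Your fallback is exactly the paper's route: compute, for each fixed $m$, $\lim_{T\to\infty}\frac{1}{2\pi i}\int_{c-iT}^{c+iT}\ERi(s\log x)m^{-s}\,ds$ through the kernel. The value you predict, $\frac{1}{\log m}A_0(\log x/\log m)$ with halving at integers, is correct. But establishing it is the whole content of the proof, and you give no mechanism for it.

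After Fubini the $m$-th term is $-\frac{\Lambda(m)}{m^c}\int_0^1Q_c(u)\frac{\sin(T(u\log x-\log m))}{\pi(u\log x-\log m)}\,du$ with $Q_c(u)=x^{cu}Q(u)$. Its limit requires Fourier's single-integral (Dirichlet--Jordan) theorem at $u_m$. The theorem applies because $Q_c\in L^1((0,1))$ and $Q_c$ is of bounded variation near $u_m>0$, having only finitely many jumps there; at $u_m=1$ you need the one-sided version. The integral then tends to $\frac{1}{2\log x}\bigl(Q_c(u_m^+)+Q_c(u_m^-)\bigr)=\frac{m^c}{2\log m}\bigl(B(y^-)+B(y^+)\bigr)$ with $y=\log x/\log m$, and this gives your value.

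For the tail $m>x$, Lem.~\ref{lem:decay} does not suffice. Its bound $(\log|t|)^{-B}$ is not integrable along $\Re e(s)=c$, and it is uniform only for $\sigma$ in a compact interval, so it cannot drive a rightward contour shift either. What works is that $|u\log x-\log m|\ge\log(m/x)>0$ on $[0,1]$. Each term then tends to $0$ by Riemann--Lebesgue, and $\frac{\|Q_c\|_1\Lambda(m)}{\pi m^c\log(m/x)}$ is a summable majorant uniform in $T$, so dominated convergence over $m$ applies.

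Your closing M\"obius inversion (with $\Pi_0$ averaged at prime powers) is fine and equivalent to the paper's use of $\sum_{n\mid r}\mu(n)=\delta_{r,1}$.
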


\begin{proof}
Writing the line $\Re e(s)=c$ by $s=c+it$, $t\in\R$, we may reparameterize
$$
I_c(T) = \frac1{2\pi}\int_{-T}^{T} \ERi(\log(x)(c+it)) \left(-\frac{\zeta'(c+it)}{\zeta(c+it)}\right)\,dt.
$$
Since $c>1$, we get for any $t\in\R$, as is well-known, cf.\ \cite{MontgomeryVaughan}, Cor.\ 1.11,
$$
-\frac{\zeta'(c+it)}{\zeta(c+it)} = \sum_{m\ge1}\frac{\Lambda(m)}{m^{c+it}}=\sum_{m\ge2}\frac{\Lambda(m)}{m^c}e^{-it\log(m)}
$$
as an absolutely convergent Dirichlet series, $\Lambda(m)$ being the Mangoldt-function, while our Prop.\ \ref{prop:kernel} gives
$$\ERi(\log(x)(c+it)) = -\int_0^1 Q(u)e^{\log(x)cu}e^{i\log(x)tu}\,du = -\int_0^1 Q_c(u)e^{i\log(x)tu}\,du,$$
with $Q_c(u):=Q(u)e^{\log(x)cu}$. Obviously, $Q_c$ lies in $L^1((0,1))$, because $\|Q_c\|_1\le e^{\log(x)c}\|Q\|_1$. Hence, for any given $T$, summation and integration may be interchanged by Fubini's theorem, since
$$
\begin{aligned}
&\frac1{2\pi}\int_{-T}^{T}\int_0^1 \sum_{m\ge2} \left| Q_c(u)\frac{\Lambda(m)}{m^c} e^{it(\log(x)u-\log(m))} \right|\,du\,dt\\
&\qquad=\frac1{2\pi}\int_{-T}^{T}\int_0^1 \sum_{m\ge2} \left| Q_c(u)\right|\frac{\Lambda(m)}{m^c} \,du\,dt\\
&\qquad= \frac{T}{\pi}\cdot \|Q_c\|_{1}\cdot \sum_{m\ge2}\frac{\Lambda(m)}{m^c}<\infty.
\end{aligned}
$$
Thus
$$
I_c(T)=-\sum_{m\ge2}\frac{\Lambda(m)}{m^c}\int_0^1Q_c(u)\left(\frac1{2\pi}\int_{-T}^{T}e^{it(\log(x)u-\log(m))}\,dt\right)du.
$$
Since
$$
\frac1{2\pi}\int_{-T}^{T}e^{itv}\,dt =\frac{\sin(Tv)}{\pi v},
$$
with the value $T/\pi$ at $v=0$, we obtain
\begin{equation}\label{eq:right-contour}
I_c(T)=-\sum_{m\ge2}\frac{\Lambda(m)}{m^c}\int_0^1Q_c(u)\frac{\sin(T(\log(x)u-\log(m)))}{\pi(\log(x)u-\log(m))}\,du.
\end{equation}
We want to determine $\lim_{T\to\infty}I_c(T)$ using \eqref{eq:right-contour}. To this end, we observe that the function $Q_c$ is integrable on $(0,1)$, and on every compact subinterval of $(0,1]$ it is of bounded variation: The only jumps of $Q$ occur at $u=1/n$, and only finitely many such jumps meet a compact set bounded away from $0$. We distinguish the case of the relative position of $x$ and $m$:\\\\
(1): If $m<x$, put $u_m=\log(m)/\log(x)\in(0,1)$. Extending $Q_c$ by zero outside $(0,1)$, Fourier's single-integral theorem, see, e.g., \cite{TitchmarshFourier}, Thm.\ 12, in \S1.14, applied at $u_m$, gives 
\begin{equation}\label{eq:right-dirichlet}
\lim_{T\to\infty}\int_0^1Q_c(u)\frac{\sin(T(\log(x)u-\log(m)))}{\pi(\log(x)u-\log(m))}\,du=\frac1{\log(x)}\,\frac{\lim_{u\to u_m^+}Q_c(u)+\lim_{u\to u_m^-}Q_c(u)}2.
\end{equation}
(2): If $m=x$, then $\log(m)=\log(x)$, and hence
$$
\int_0^1Q_c(u)\frac{\sin(T(\log(x)u-\log(m)))}{\pi(\log(x)u-\log(m))}\,du=\frac{1}{\log(x)}\int_0^1Q_c(u)\frac{\sin(T\log(x)(u-1))}{\pi(u-1)}\,du.
$$
Extend $Q_c$ by zero to the right of $u=1$.  Applying \cite{TitchmarshFourier}, Thm.\ 12 once more at the resulting jump point $u=1$, hence gives
\begin{equation}\label{eq:right-dirichlet2}
\lim_{T\to\infty}\int_0^1Q_c(u)\frac{\sin(T(\log(x)u-\log(m)))}{\pi(\log(x)u-\log(m))}\,du=\frac{\lim_{u\to 1^-}Q_c(u)}{2\log(x)}.
\end{equation}
(3): If $m>x$, then $\log(m)-\log(x)>0$, and hence
\begin{equation}\label{eq:huiui}
\int_0^1\left| \frac{Q_c(u)}{\pi(\log(x)u-\log(m))}\right|\,du \leq \int_0^1\frac{|Q_c(u)|}{\pi(\log(m)-\log(x))}\,du=\frac{\|Q_c\|_1}{\pi(\log(m)-\log(x))}<\infty,
\end{equation}
implying that $h_m(u):=\frac{Q_c(u)}{\pi(\log(x)u-\log(m))}$ is in $L^1((0,1))$. Moreover,
$$
\begin{aligned}
\int_0^1Q_c(u)\frac{\sin(T(\log(x)u-\log(m)))}{\pi(\log(x)u-\log(m))}\,du&=\Im m\left(e^{-iT\log(m)}\int_0^1 h_m(u)e^{iT\log(x)u}\,du\right).
\end{aligned}
$$
Since $\log(x)>0$, the Riemann--Lebesgue theorem, cf.\  \cite{TitchmarshFourier}, Thm.\ 1 in \S1.8, implies that the last integral tends to $0$ as $T\to\infty$ for every fixed $m>x$. Moreover, by what we have just observed in \eqref{eq:huiui}, 
$$\left|\int_0^1Q_c(u) \frac{\sin(T(\log(x)u-\log(m)))}{\pi(\log(x)u-\log(m))}\,du\right|\le \frac{\|Q_c\|_1}{\pi(\log(m)-\log(x))},
$$
and hence
$$
\frac{\Lambda(m)}{m^c}\left|\int_0^1Q_c(u) \frac{\sin(T(\log(x)u-\log(m)))}{\pi(\log(x)u-\log(m))}\,du\right|\le\frac{\|Q_c\|_1}{\pi}\frac{\Lambda(m)}{m^c(\log(m)-\log(x))}.
$$
The majorant is summable: Indeed, for all sufficiently large $m\gg x$, $\log(m)-\log(x)\ge\frac12\log(m)$, and since also $\Lambda(m)\le\log(m)$,
$$
\frac{\Lambda(m)}{m^c(\log(m)-\log(x))}\le \frac{2}{m^c}.
$$
Because $c>1$, $\sum_{m\gg x}\frac{2}{m^c}$ converges -- the finitely many remaining
terms cause no difficulty. Therefore, dominated convergence yields
$$
\lim_{T\to\infty}\left(\sum_{m>x}\frac{\Lambda(m)}{m^c} \int_0^1Q_c(u) \frac{\sin(T(\log(x)u-\log(m)))}{\pi(\log(x)u-\log(m))}\,du\right)=
$$
$$
=\sum_{m>x}\frac{\Lambda(m)}{m^c} \lim_{T\to\infty} \left(\int_0^1Q_c(u) \frac{\sin(T(\log(x)u-\log(m)))}{\pi(\log(x)u-\log(m))}\,du\right)$$
$$=0.
$$
It follows that the contribution of those summands of the right hand side of \eqref{eq:right-contour}, which are indexed by $m>x$, to the limit $\lim_{T\to\infty}I_c(T)$ vanishes. \\\\
It hence remains to evaluate the contribution of the summands with $m\leq x$. To this end we use \eqref{eq:right-dirichlet} and \eqref{eq:right-dirichlet2}: In order to treat the left- and right-sided limits therein most efficiently, we define for $y\ge1$ the auxiliary function 
\begin{equation}\label{eq:A0}
A_0(y):=\sum_{n<y}\frac{\mu(n)}n+\frac12\,\mathbf 1_{\mathbb N}\frac{\mu(y)}y,
\end{equation}
where $\mathbf 1_{\mathbb N}$ denotes the characteristic function of $\N=\{1,2,3,...\}\subset\R$. We now relate the averaged one-sided values of $B$ to $A_0$.  We
claim that, for every $y\ge1$,
\begin{equation}\label{eq:B-average-A0}
-\frac{\lim_{y\to y^-}B(y)+\lim_{y\to y^+}B(y)}2=A_0(y).
\end{equation}
Indeed, if $y\notin\mathbb N$, then $B$ is continuous at $y$, and using $\sum_{n=1}^{\infty}\frac{\mu(n)}n=0$, we obtain
$$
-B(y)=-\sum_{n>y}\frac{\mu(n)}n=\sum_{n<y}\frac{\mu(n)}n=A_0(y).
$$
If $y=N\in\mathbb N$, then
$$
\lim_{y\to N^-}B(y)=\frac{\mu(N)}N+\sum_{n>N}\frac{\mu(n)}n,\qquad\lim_{y\to N^+}B(y)=\sum_{n>N}\frac{\mu(n)}n.
$$
Therefore
$$
\begin{aligned}
-\frac{\lim_{y\to N^-}B(y)+\lim_{y\to N^+}B(y)}2
&=-\sum_{n>N}\frac{\mu(n)}n-\frac12\frac{\mu(N)}N\\
&=\sum_{n<N}\frac{\mu(n)}n+\frac12\frac{\mu(N)}N\\
&=A_0(N),
\end{aligned}
$$
which proves \eqref{eq:B-average-A0}.\\\\
Having made this intermediate observation, we now resume our discussion of the contribution of the summands indexed by $m<x$. Using \eqref{eq:right-dirichlet}, and reinserting into the definition, namely
$$
Q_c(u)=e^{\log(x)cu}Q(u)=e^{\log(x)cu}u^{-1} B(u^{-1})=e^{\log(x)cu} u^{-1}\sum_{n>u^{-1}}\frac{\mu(n)}{n},
$$
it follows that the limiting contribution of a summand index by $m<x$ to the limit $\lim_{T\to\infty}I_c(T)$ is
$$
-\frac{\Lambda(m)}{\log(m)}\,\frac{\lim_{y\to (u^{-1}_m)^+}B(y)+\lim_{y\to (u^{-1}_m)^-} B(y)}2 =\frac{\Lambda(m)}{\log(m)}\,A_0\!\left(\frac{\log(x)}{\log(m)}\right).
$$
If $m=x$ is an integer, then $u_m=1$. We have 
$$\lim_{u\to 1^-}Q_c(u)=\lim_{y\to 1^+}e^{\log(m)cy}B(y)=-m^c.$$
Therefore, \eqref{eq:right-dirichlet2} shows that the contribution of $m=x$ to the limit of $I_c(T)$ is
$$
-\frac{\Lambda(m)}{m^c}\left(-\frac{m^c}{2\log(m)}\right)=\frac{\Lambda(m)}{2\log m}=\frac{\Lambda(m)}{\log(m)}
A_0\!\left(\frac{\log(x)}{\log(m)}\right),
$$
where the last equation follows from the fact that $A_0(1)=\frac12$.\\\\
Altogether, we therefore obtain the unified formula
\begin{equation*}\label{eq:right-limit-A0}
\lim_{T\to\infty}I_c(T)=\sum_{m\le x}\frac{\Lambda(m)}{\log(m)}\,A_0\!\left(\frac{\log(x)}{\log(m)}\right).
\end{equation*}
Expanding the definition of $A_0(y)$, we obtain
\begin{equation}\label{eq:right-expanded}
\lim_{T\to\infty}I_c(T)=\sum_{\substack{m\ge2,\ n\ge1\\ m^n<x}}\frac{\Lambda(m)}{\log m}\frac{\mu(n)}n+\frac12\sum_{\substack{m\ge2,\ n\ge1\\ m^n=x}}\frac{\Lambda(m)}{\log m}\frac{\mu(n)}n.
\end{equation}
Since $\Lambda(m)=0$ unless $m$ is a prime power, we may assume $m=p^k$, with $p$ prime and $k\ge1$ in the above sums. Then, for any $n\geq 1$, 
$$
\frac{\Lambda(p^k)}{\log(p^k)} \frac{\mu(n)}n= \frac{\log p}{k\log p} \frac{\mu(n)}n= \frac{\mu(n)}{kn}.
$$
If we put in turn $r=kn$, then for fixed $p$ and $r$, the total contribution of the number $p^r=m^n$ to each of the two sums in \eqref{eq:right-expanded} is
$$
\frac1r\sum_{n\mid r}\mu(n)=
\begin{cases}
1,&r=1,\\
0,&r>1,
\end{cases}
$$
by the classical M\"obius identity $\sum_{n\mid r}\mu(n)=\delta_{r,1}$. Consequently, the contribution of all proper prime powers $p^r$, $r>1$, to \eqref{eq:right-expanded} is trivial. Therefore, we are left with
$$
\lim_{T\to\infty}I_c(T)=\sum_{p<x}1 +
\begin{cases}
1/2& \textrm{if $x$ is prime},\\
0 & \textrm{else},
\end{cases}
$$
which is nothing else than the averaged prime-counting function $\pi_0(x)$. This proves the result.
\end{proof}

\subsection{The left vertical contribution and the functional equation of $\zeta(s)$}
We now use the functional equation of the $\zeta$-function. Its logarithmic derivative gives
$$
-\frac{\zeta'(s)}{\zeta(s)} = \psi(1-s)-\log(2\pi) -\frac{\pi}{2}\cot\left(\frac{\pi s}{2}\right) +\frac{\zeta'(1-s)}{\zeta(1-s)}.
$$
Here $\psi(s)=\Gamma'(s)/\Gamma(s)$ denotes the digamma function. By Stirling's expansion for the logarithmic derivative of the gamma function, see, e.g., \cite{MontgomeryVaughan}, (C.20), one has uniformly in sectors bounded away from the negative real axis, i.e., $|\arg(z)|\leq \pi-\delta$ for $\delta>0$,
$$
\psi(z)=\log z-\frac{1}{2z}+O(|z|^{-2}).
$$
Thus, for fixed $0<a<1$ and $t\to\infty$,
$$
\psi(1+a-it) = \log(t)-\frac{\pi i}{2} +\frac{i(a+\tfrac12)}{t} +O_a(t^{-2}).
$$
Moreover,
$$
\cot\left(\frac{\pi(-a+it)}2\right) =i\frac{e^{\pi i(-a+it)}+1}{e^{\pi i(-a+it)}-1} = -i+O_a(e^{-\pi t}).
$$
Substitution into the logarithmic derivative of the functional equation therefore gives
\begin{equation}\label{stirling}
-\frac{\zeta'(-a+it)}{\zeta(-a+it)} = \log\left(\frac{t}{2\pi}\right) +\frac{i(a+\tfrac12)}t +\frac{\zeta'(1+a-it)}{\zeta(1+a-it)} +O_a(t^{-2}).
\end{equation}
Next recall the function $P_A(Y)$ from \eqref{eq:PAdef}. For our given, arbitrary, but fixed real numbers $x>1$ and $0<a<1$, we now specify $A:=a\log(x)>0$ and $C:=2\pi \log(x)>0$ in its definition, i.e., we consider for $Y>1$
$$
P_A(Y) = \int_1^Y \log(y(2\pi \log(x))^{-1})\,
\Re e(\ERi(-a\log(x)+iy))\,dy,
$$

\begin{proposition}\label{prop:left}
There is a constant $C_{a,x}$, depending on $a$ and $x$, but independent of $T$, such that
$$
I_{-a}(T) = \frac{1}{\pi \log(x)}P_A(\log(x)T) +  C_{a,x} + o(1)
$$
as $T\to\infty$.
\end{proposition}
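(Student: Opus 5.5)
Parametrize the left side by $s=-a+it$, $t\in[-T,T]$, so that
$$I_{-a}(T)=\frac1{2\pi}\int_{-T}^{T}\ERi(\log(x)(-a+it))\Bigl(-\frac{\zeta'}{\zeta}(-a+it)\Bigr)\,dt .$$
First I would symmetrize. Since $\ERi$ has real Taylor coefficients by \eqref{eq:ERI} and $\zeta'/\zeta$ is real on the real axis, the integrand at $-t$ is the complex conjugate of the integrand at $t$. Hence
$$I_{-a}(T)=\frac1\pi\int_0^T\Re e\Bigl(\ERi(\log(x)(-a+it))\Bigl(-\frac{\zeta'}{\zeta}(-a+it)\Bigr)\Bigr)dt .$$
Split this at $t=t_0$ for a fixed $t_0\ge1/\log(x)$ with $t_0>1$. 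The piece over $[0,t_0]$ is a constant independent of $T$; note that for $0<a<1$ the line $\Re e(s)=-a$ contains no zero of $\zeta$, so the integrand is continuous there. On $[t_0,T]$ insert \eqref{stirling}, which writes $-\zeta'/\zeta(-a+it)$ as a sum of four terms. I then handle each term separately.

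\emph{The main term $\log(t/2\pi)$.} Substitute $y=\log(x)t$. Then $\log(t/2\pi)=\log(y(2\pi\log x)^{-1})$, and $\Re e\,\ERi(-A+iy)=f_A(y)$ with $A=a\log x$, $C=2\pi\log x$. Since $\log(t/2\pi)$ is real, the term becomes
$$\frac{1}{\pi\log x}\int_{\log(x)t_0}^{\log(x)T}\log(yC^{-1})f_A(y)\,dy=\frac{1}{\pi\log x}P_A(\log(x)T)-(\text{const}),$$
where the constant is the integral over $[1,\log(x)t_0]$.

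\emph{The $O_a(t^{-2})$ remainder.} $\ERi$ is bounded on the line, for instance by Prop.\ \ref{prop:kernel}, which gives $|\ERi(-A+iy)|\le e^{A}\|Q\|_1$. So this term is absolutely integrable on $[t_0,\infty)$, and its contribution equals a constant plus $o(1)$.

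\emph{The term $i(a+\tfrac12)/t$.} Here absolute convergence fails, and this is the main obstacle. Lem.\ \ref{lem:decay} gives decay $(\log t)^{-B}$ for $\ERi(\log(x)(-a+it))$. With $B>1$ this makes $\int^\infty (\log t)^{-B}t^{-1}\,dt$ converge. Hence the integral converges absolutely and again yields a constant plus $o(1)$.

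\emph{The term $\zeta'/\zeta(1+a-it)$.} This is the hardest. The Dirichlet series gives $-\sum_m\Lambda(m)m^{-1-a}m^{it}$, which is bounded but does not decay. Pairing it with the $(\log t)^{-B}$ decay of $\ERi$ is not integrable, so a crude bound fails. I would instead argue as in the proof of Prop.\ \ref{lem:right}. Insert the kernel representation $\ERi(\log(x)(-a+it))=-\int_0^1Q_{-a}(u)e^{i\log(x)tu}\,du$, where $Q_{-a}(u):=Q(u)x^{-au}$, and expand the Dirichlet series. Since $\sum_m\Lambda(m)m^{-1-a}<\infty$ and $\|Q_{-a}\|_1<\infty$, Fubini applies, and for each $m$ one gets
$$\int_0^1Q_{-a}(u)\int_{t_0}^T e^{it(\log(x)u+\log m)}\,dt\,du .$$
The phase $\log(x)u+\log m\ge\log 2>0$ never vanishes, so the inner integral equals $\bigl(e^{iT(\cdots)}-e^{it_0(\cdots)}\bigr)/(i(\log(x)u+\log m))$. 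Riemann--Lebesgue, applied to the $L^1$ function $Q_{-a}(u)/(\log(x)u+\log m)$, sends the $T$-dependent part to $0$. Each summand is bounded by $\|Q_{-a}\|_1/\log m$, and $\sum_m\Lambda(m)m^{-1-a}$ converges, so dominated convergence over $m$ gives convergence to a constant. The one care point is that \eqref{stirling} is written with $\zeta'/\zeta(1+a-it)$ evaluated at $-it$ rather than $+it$. This is consistent with the expansion above and produces the phase $\log(x)u+\log m$, which, unlike on the right contour, never vanishes, so no Dirichlet–Fourier point contributions arise.

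Collecting all constants into $C_{a,x}$ gives the statement.
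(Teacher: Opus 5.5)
Your proposal is correct and follows the paper's proof essentially step by step. You use the same symmetrization, cut-off at $t_0$, insertion of \eqref{stirling}, and substitution $y=\log(x)t$ producing $P_A$. For the $\zeta'/\zeta(1+a-it)$ term you use the same kernel representation with Fubini, Riemann--Lebesgue applied to $Q_{-a}(u)/(\log(x)u+\log m)$, and dominated convergence over $m$.

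The only deviation is the $i(a+\tfrac12)/t$ term. You handle it by absolute convergence, using the $(\log t)^{-B}$ decay of Lem.~\ref{lem:decay} with $B>1$. The paper instead inserts the kernel representation and uses boundedness of the sine integral $\operatorname{Si}$ together with dominated convergence. That way it needs only $q_A\in L^1((0,1))$, not the external input behind Lem.~\ref{lem:decay}. Both arguments are valid.
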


\begin{proof}
Reparameterizing the integral defining $I_{-a}(T)$, we get
$$
I_{-a}(T) = \frac1{2\pi}\int_{-T}^T\ERi(\log(x)(-a+it))\left(-\frac{\zeta'(-a+it)}{\zeta(-a+it)}\right)dt.
$$
Its integrand $g(t)$ satisfies $g(-t)=\overline{g(t)}$, whence dividing the integration as $\int_{-T}^T=\int_{0}^T+\int_{-T}^0$, and flipping signs/boundaries in the latter, one gets
$$
I_{-a}(T) = \frac1\pi \Re e\left(\int_0^T \ERi(-A+i\log(x)t) \left(-\frac{\zeta'(-a+it)}{\zeta(-a+it)}\right)\,dt\right)
$$
Choose some $t_0\ge2$, sufficiently large such that $\log(x)t_0\geq 1$ and such that \eqref{stirling} holds uniformly for $t\ge t_0$. Since $a$ and $x$ are fixed, the contribution of the compact interval $0\le t\le t_0$ to $I_{-a}(T)$ is a finite constant depending only on $a$ and $x$, and will henceforth be absorbed into what will finally be $C_{a,x}$. Thus, inserting \eqref{stirling}, it remains to analyze
\begin{equation}\label{eq:final1} 
\frac1\pi \Re e\left(\int_{t_0}^T \ERi(-A+i\log(x)t) \left(\log\left(\frac{t}{2\pi}\right) +\frac{i(a+\tfrac12)}t +\frac{\zeta'(1+a-it)}{\zeta(1+a-it)} +O_a(t^{-2})\right)\right)dt
\end{equation}
Its summand attached to the $O_a(t^{-2})$-term is absolutely integrable, because $\ERi(-A+i\log(x)t)$ is bounded as a function in $t\in\R$ by Prop.\ \ref{prop:kernel}. It hence contributes a constant -- which will again be absorbed into $C_{a,x}$ -- and an $o(1)$-term to \eqref{eq:final1}, or, equivalently, to $I_{-a}(T)$.\\\\ 
We next consider the summand of \eqref{eq:final1} attached to $\frac{i(a+\tfrac12)}{t}$. Its contribution to \eqref{eq:final1} is
$$
-\frac{a+\tfrac12}{\pi} \int_{t_0}^T \frac{\Im m(\ERi(-A+i\log(x)t))}{t}\,dt,
$$
since $\Re e(iz)=-\Im m(z)$. We claim that this expression converges as $T\to\infty$. Indeed, by our kernel representation, cf.\ again Prop.\ \ref{prop:kernel}, 
$$
\Im m(\ERi(-A+i\log(x)t)) = -\int_0^1q_A(u)\sin(\log(x)tu)\,du.
$$
For fixed $T$, Fubini's theorem therefore gives
$$
\begin{aligned}
\int_{t_0}^T\frac{\Im m(\ERi(-A+i\log(x)t))}{t}\,dt
&=-\int_0^1q_A(u)\left(\int_{t_0}^T\frac{\sin(\log(x)tu)}t\,dt\right)du\\
&=-\int_0^1q_A(u)\bigl(\operatorname{Si}(T\log(x)u)-\operatorname{Si}(t_0\log(x)u)\bigr)\,du,
\end{aligned}
$$
where $\operatorname{Si}(y):=\int_0^y\frac{\sin v}{v}\,dv$ denotes the sine-integral. The latter is bounded as a function on $y\in [0,\infty)$ with $\lim_{y\to\infty}\operatorname{Si}(y)=\tfrac{\pi}{2}.$ Hence the expression $\operatorname{Si}(T\log(x)u)-\operatorname{Si}(t_0\log(x)u)$ is uniformly bounded in $T\ge t_0$ and $0<u<1$, and, for every fixed $u>0$, tends to $\frac{\pi}{2}-\operatorname{Si}(t_0\log(x)u)$ as $T\to\infty$. Since $q_A\in L^1((0,1))$, dominated convergence now yields
$$
\lim_{T\to\infty}\int_{t_0}^T\frac{\Im m(\ERi(-A+i\log(x)t))}{t}\,dt = -\int_0^1q_A(u) \left(\frac{\pi}{2}-\operatorname{Si}(t_0\log(x)u)\right)du<\infty.
$$
Thus, the contribution of the term attached to $\frac{i(a+\tfrac12)}{t}$ in \eqref{eq:final1} to $I_{-a}(T)$ is a constant -- which we may once more absorb into our final choice of $C_{a,x}$ -- together with an $o(1)$-term.\\\\
We next treat the term in \eqref{eq:final1}, which is attached to the absolutely convergent (recall that $0<a$) Dirichlet series
$$
\frac{\zeta'(1+a-it)}{\zeta(1+a-it)} = -\sum_{m\ge2}\frac{\Lambda(m)}{m^{1+a}}e^{it\log(m)}.
$$
Since $\ERi(-A+i\log(x)t)=-\int_0^1q_A(u)e^{i\log(x)tu}\,du$, cf.\ Prop.\ \ref{prop:kernel}, the contribution of this Dirichlet-series term to \eqref{eq:final1} equals
\begin{equation}\label{eq:left-dirichlet}
\frac1\pi\Re e\left(\sum_{m\ge2}b_m\int_0^1q_A(u)\frac{e^{iT\lambda_m(u)}-e^{it_0\lambda_m(u)}}{i\lambda_m(u)}\,du\right),
\end{equation}
where we have abbreviated $b_m:=\Lambda(m)m^{-1-a}$ and $\lambda_m(u):=\log(x)u+\log(m)$. We observe that the just performed interchange of integration and summation is justified by the absolute convergence of $\sum_{m\geq 2} b_m$ and by the fact that $q_A\in L^1((0,1))$. Now, the second exponential showing up in \eqref{eq:left-dirichlet} is independent of $T$ and hence contributes a fixed constant. For the first one, observe that $\lambda_m(u)\geq\log(m)\geq\log2>0$, independent of $u$, and hence for every fixed $m\geq 2$
$$
\lim_{T\to\infty}\int_0^1q_A(u)\frac{e^{iT\lambda_m(u)}}{i\lambda_m(u)}\,du=0
$$
by the Riemann--Lebesgue theorem, see again \cite{TitchmarshFourier}, Thm.\ 1 in \S1.8. Moreover, for each $T\geq t_0$,
$$
\left|\int_0^1q_A(u)\frac{e^{iT\lambda_m(u)}}{\lambda_m(u)}\,du\right|\le\frac{\|q_A\|_1}{\log(m)}.
$$
independent of $T$. Therefore, for each $T\geq t_0$,
$$\left|\sum_{m\ge2}b_m\int_0^1q_A(u)\frac{e^{iT\lambda_m(u)}}{i\lambda_m(u)}\,du\right| \leq \|q_A\|_1\sum_{m\ge2}\frac{\Lambda(m)}{m^{1+a}\log(m)}\leq \|q_A\|_1\sum_{m\ge2}\frac{1}{m^{1+a}}<\infty$$
as $a>0$. As a consequence, dominated convergence shows that 
$$\lim_{T\to\infty}\left(\sum_{m\ge2}b_m\int_0^1q_A(u)\frac{e^{iT\lambda_m(u)}}{i\lambda_m(u)}\,du\right) = \sum_{m\ge2}b_m \lim_{T\to\infty}\left(\int_0^1q_A(u)
\frac{e^{iT\lambda_m(u)}}{i\lambda_m(u)}\,du\right)= 0$$
It follows that the term corresponding to $\frac{\zeta'(1+a-it)}{\zeta(1+a-it)} $ in \eqref{eq:final1} contributes a constant to $I_{-a}(T)$ -- which will again be absorbed into the final $C_{a,x}$ -- together with an $o(1)$-term.\\\\
We have therefore proved that every term furnished by the functional equation except the logarithmic term contributes a constant, only depending on our fixed $0<a<1$ and $x>1$, together with an $o(1)$-term to $I_{-a}(T)$ as $T\to\infty$ . To analyze the last remaining ingredient, i.e., to determine the contribution of the summand of \eqref{eq:final1} attached to $\log\left(\frac{t}{2\pi}\right)$ to $I_{-a}(T)$, we observe that changing variables by $y=\log(x)t$ gives
$$
\frac1\pi\int_{t_0}^T\log\left(\frac{t}{2\pi}\right)\,f_A(\log(x)t)\,dt = \frac1{\pi \log(x)}\int_{\log(x)t_0}^{\log(x)T} \log\left(\frac{y}{2\pi \log(x)}\right)\,f_A(y)\,dy,
$$
where we recall that $f_A(y)=\Re e(\ERi(-A+iy))$ by definition. Hence, the contribution of the term corresponding to $\log\left(\frac{t}{2\pi}\right)$ to \eqref{eq:final1} equals
$$
\begin{aligned}
\frac1\pi \Re e\left(\int_{t_0}^T \ERi(-A+i\log(x)t) \log\left(\frac{t}{2\pi}\right) dt\right) 
&=\frac1\pi \int_{t_0}^T \Re e\left(\ERi(-A+i\log(x)t)\right) \log\left(\frac{t}{2\pi}\right) \\
&=\frac1{\pi \log(x)}\int_{\log(x)t_0}^{\log(x)T} \log\left(\frac{y}{2\pi \log(x)}\right)\,f_A(y)\,dy,\\
&=\frac1{\pi \log(x)} P_A(\log(x)T) + C'_{a,x},
\end{aligned}
$$
for $C'_{a,x}:=\frac{-1}{\pi\log(x)}\int_{1}^{\log(x)t_0}\log\left(\frac{y}{2\pi \log(x)}\right)\,f_A(y)\,dy$. Adding $C'_{a,x}$ to the constants obtained so far from the other summands in the functional equation only alters the final choice of our constant $C_{a,x}$. The proof is complete.
\end{proof}

\section{Proof of the main theorem}\label{sect:proof}
\noindent We are now ready to give the proof of our main result. Recall the partial sums, defined for real numbers $x>1$ and $T>0$ as
$$
\Sigma R_T(x)= \sum_{\substack{\zeta(\rho)=0\\0<|\Im m(\rho)|\le T}} m_\rho\ERi(\rho\log x),
$$
where $m_\rho$ denotes the finite multiplicity of the zero $\rho$ of $\zeta(s)$ and also recall ``Riemann's constant'' 
$$\Theta:=\sup\{\Re e(\rho):\ \zeta(\rho)=0,\ 0<\Re e(\rho)<1\}.$$

\begin{proof}[Proof of the Main Theorem:]
Let $x>1$ be arbitrary, but fixed and assume for contradiction that there is a $\theta\in\R$ with $0<\theta<\Theta$ such that
\begin{equation}\label{eq:S-assumption}
\Sigma R_T(x)=O(T^\theta).
\end{equation}
By the definition of $\Theta$, there exists a non-trivial zero $\rho$ of $\zeta(s)$ with $\beta:=\Re e(\rho)>\theta$. Applying Lem.\ \ref{lem:good} for sufficiently large $T$, we may choose $T'\in[T,T+1]$ such that $T'$ is not the ordinate of a zero and get, for any chosen $0<a<1$ and $1<c\leq 2$,
$$H_{a,c}(T')=o(1) \qquad(T\to\infty),$$
Along such ordinates $T'$, Prop.\ \ref{lem:residue} applies as well. Hence, we get, see \eqref{eq:contour},
\begin{equation}\label{eq:transfer-good}
\Sigma R_{T'}(x) = \ERi(\log(x))-I_c(T')+I_{-a}(T')-o(1),
\end{equation}
along the chosen ordinates $T'=T'(T)$, as $T\to\infty$. Invoking Prop.\ \ref{lem:right} and Prop.\ \ref{prop:left}, we hence obtain
$$ \Sigma R_{T'}(x) = \ERi(\log(x))-(\pi_0(x)+o(1))+\left(\frac1{\pi \log(x)}P_A(\log(x)T')+C_{a,x}+o(1)\right)-o(1),$$
as $T'\to\infty$ (i.e., $T\to\infty$). Subsuming all summands, which are independent of $T'$ into one constant $D_{a,x}$, this simplifies to
$$\Sigma R_{T'}(x) = \frac1{\pi \log(x)}P_A(\log(x)T')+D_{a,x}+o(1).$$
By the assumed bound \eqref{eq:S-assumption}, this implies that
\begin{equation}\label{eq:P-good-bound}
P_A(\log(x)T')=O(T'^\theta).
\end{equation}
Since $T'\in[T,T+1]$, we have $T'=T+O(1)$, and hence
$$
P_A(\log(x)T')=O(T^\theta).
$$
Moreover, Prop.\ \ref{prop:kernel} implies that $f_A$ is bounded, whence
$$
P_A'(t)=\log\left(\frac{t}{2\pi\log(x)}\right)f_A(t)=O_{a,x}(\log(t)).
$$
It follows that
$$
\begin{aligned}
|P_A(\log(x)T')-P_A(\log(x)T)| 
&=\left|\int_{\log(x)T}^{\log(x)T'} P'_A(t)\,dt\right|\\
&\leq |\log(x)T'-\log(x)T |  \cdot O(\log(\log(x)T')) \\
&=O_{a,x}(\log(T'))\\
&=O_{a,x}(\log(T))
\end{aligned}
$$
where the second last equation is a consequence of $|\log(x)T'-\log(x)T |\leq \log(x)$ and the last equation uses again $T'=T+O(1)$. Therefore,
$$
P_A(\log(x)T)=O(T^\theta)-O_{a,x}(\log(T))=O(T^\theta),
$$
since $\theta>0$. As $\log(x)>0$ is fixed, replacing $T$ by
$T/\log(x)$ yields
$$
P_A(T)=O(T^\theta).
$$
But this contradicts Prop.\ \ref{prop:Omega}, since the nontrivial zero $\rho$ chosen above satisfies $\Re e(\rho)=\beta>\theta$. Hence, $\Sigma R_T(x)\neq O(T^\theta)$ for every $0<\theta<\Theta$, and so, clearly for all $\theta<\Theta$.
\end{proof}

\section{Complementa}\label{sect:41}
\subsection{Remarks on the contribution of the trivial zeros} 
Let us, for completeness, also consider the second summand of \eqref{eq:Rdef2}, which involves the trivial zeros of $\zeta(s)$. As mentioned in \cite{Elliott2025}, one sometimes encounters the claimed formula \eqref{eq:trivRs}, 
$$\sum_{k\geq 1} \ERi(-2k\log(x))=\frac{1}{\log(x)} - \frac{1}{\pi} \arctan{\frac{\pi}{\log(x)}},$$ 
which seems to be a derivation of another problematic interchange of summation in two conditionally convergent series (and a certain ``regularized reinterpretation'' of $\sum_{n\geq 1}\mu(n)$ as $\zeta(0)^{-1}=-2$). See \cite{Elliott2025}, pp.\ 191--192 for details.\\\\ 
However, the left hand side $\sum_{k\geq 1} R(x^{-2k})=\sum_{k\geq 1}\ERi(-2k\log(x))$ of \eqref{eq:trivRs} does not converge, for reasons quite similar, but simpler than the ones entering our main theorem above. We summarize this in the following result: For real numbers $x>1$ and $T>1$ put 
$$\Sigma R^{triv}_T(x):=\sum_{1\leq k\leq T}\ERi(-2k\log(x)).$$
We obtain

\begin{proposition}\label{prop:trivial-zeros}
Let $x>1$ be arbitrary, but fixed. Then, for every $\theta<\Theta$,  the partial sums $\Sigma R^{triv}_T(x)$ are not $O(T^\theta)$ as $T\to\infty$. In particular,
$\limsup_{N\to\infty}\left|\sum_{1\leq k\leq N} R(x^{-2k})\right|=\infty$, and hence the series $\sum_{k\geq1}R(x^{-2k})$ does not converge.
\end{proposition}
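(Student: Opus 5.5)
The plan is to mimic Prop.\ \ref{prop:Omega}, but with a Dirichlet series in place of the truncated Mellin transform. Put $L:=\log(x)>0$, $a_k:=\ERi(-2kL)$, and
$$D(s):=\sum_{k\ge1}a_k k^{-s}.$$
By Prop.\ \ref{prop:kernel}, $|a_k|\le\|Q\|_1$, so $D(s)$ converges absolutely for $\Re e(s)>1$. I will show that $D$ continues meromorphically to $\Re e(s)>0$ with a pole at every non-trivial zero $\rho$. Granting this, suppose $\Sigma R^{triv}_T(x)=O(T^\theta)$ for some $\theta<\Theta$; enlarging $\theta$ we may assume $0<\theta<\Theta$. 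Partial summation gives
$$D(s)=s\int_1^\infty \Sigma R^{triv}_t(x)\,t^{-s-1}\,dt,$$
which would then be holomorphic on $\Re e(s)>\theta$, exactly as in the proof of Prop.\ \ref{prop:Omega}. This contradicts the pole at a zero $\rho$ with $\Re e(\rho)>\theta$. The $\limsup$ and divergence statements follow since $\Theta\ge 1/2>0$, so one may take $\theta=0$.

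\emph{Step 1 (kernel representation).} For $\Re e(s)>1$, Prop.\ \ref{prop:kernel} and Fubini (justified by $Q\in L^1((0,1))$ and absolute convergence in $k$) give
$$D(s)=-\int_0^1 Q(u)\,\Phi_s(2Lu)\,du,\qquad \Phi_s(v):=\sum_{k\ge1}k^{-s}e^{-kv}.$$
Fix $0<\delta<\min(1,\pi/L)$ and split the $u$-integral at $\delta$. On $[\delta,1]$ we have $v=2Lu\ge 2L\delta>0$, so $\Phi_s(2Lu)$ is entire in $s$ and uniformly bounded there; hence that piece is entire.

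\emph{Step 2 (the piece near $u=0$).} For $0<v<2\pi$ and $s\notin\Z$, use the classical expansion
$$\Phi_s(v)=\Gamma(1-s)v^{s-1}+\sum_{j\ge0}\zeta(s-j)\frac{(-v)^j}{j!},$$
where the series converges locally uniformly in $s$ away from the poles, because $|v|<2\pi$. Inserting this into $\int_0^\delta$ produces three pieces, each to be continued meromorphically.
\begin{itemize}
\item The term $-\Gamma(1-s)(2L)^{s-1}\int_0^\delta Q(u)u^{s-1}\,du$. By Lem.\ \ref{lem:MellinQ}, $\int_0^\delta Q(u)u^{z}\,du=-\frac{1}{z\zeta(z+1)}-\int_\delta^1Q(u)u^z\,du$, and the second integral is entire. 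So this integral is meromorphic in $z$, with a pole at $z=\rho-1$ of the order of the zero. Hence this term contributes a genuine pole at $s=\rho$, since $\Gamma(1-\rho)(2L)^{\rho-1}\neq0$.
\item The $j=0$ term $\zeta(s)\int_0^\delta Q(u)\,du$, which is holomorphic for $s\neq1$.
\item The terms $j\ge1$, $\zeta(s-j)\frac{(-2L)^j}{j!}\int_0^\delta Q(u)u^j\,du$. These are holomorphic near $\rho$ because $\rho-j\ne1$. They are bounded by $\|Q\|_1(2L\delta)^j/j!$ times $|\zeta(s-j)|$. Since $2L\delta<2\pi$, they sum locally uniformly, using the functional equation growth $|\zeta(s-j)|\ll (j/2\pi)^{j}\cdot O(1)$ together with $j!$.
\end{itemize}
The apparent poles at integers $s$ cancel, as in the proof of Prop.\ \ref{prop:HA}. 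In any case none lie in the open critical strip.

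\emph{Main obstacle.} The delicate point is justifying the series expansion of $\Phi_s$ termwise under the integral against $Q$. This means checking convergence of $\sum_j \zeta(s-j)(-2Lu)^j/j!$ uniformly for $u\in(0,\delta]$, which is precisely why $\delta<\pi/L$ is imposed. One must also verify that the various meromorphic continuations agree with $D$ on $\Re e(s)>1$. I would handle this by first proving the identity for real $s>1$ non-integer, with all integrals absolutely convergent, and then invoking uniqueness of continuation. The non-cancellation of the pole at $\rho$ then follows exactly as in Lem.\ \ref{lem:MA}: only the $\Gamma(1-s)$-term is singular there.
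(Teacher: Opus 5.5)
Your proposal is correct and follows the paper's proof essentially step by step: the kernel representation $D_x(s)=-\int_0^1 Q(u)\operatorname{Li}_s(e^{-2\log(x)u})\,du$, the split at $\delta$ with $2\log(x)\delta<2\pi$, Jonqui\`ere's expansion, Lem.~\ref{lem:MellinQ} producing the pole of $\Gamma(1-s)(2\log(x))^{s-1}/((s-1)\zeta(s))$ at $\rho$, and the partial-summation contradiction on $\Re e(s)>\theta$. One small slip: the bound $|\zeta(s-j)|\ll (j/2\pi)^j$ loses a factor $e^j$ against $j!$, so it only gives convergence for $2\log(x)\delta<2\pi/e$. Use instead $|\zeta(s-j)|\ll_K j^{C_K}\,j!\,(2\pi)^{-j}$ from the functional equation and Stirling, as the paper does, or simply shrink $\delta$.
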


\begin{proof}
Fix $x>1$ once and for all and put for simplicity $a_k:=\ERi(-2k\log(x))$, for $k\geq 1$. By Prop.\ \ref{prop:kernel}, $|a_k|\leq \|Q\|_1$, and, by dominated convergence, $a_k\to0$ as $k\to\infty$. For $\Re e(s)>1$, we consider the absolutely convergent Dirichlet series
$$
D_x(s):=\sum_{k\geq1}\frac{a_k}{k^s}.
$$
In a first step, we want to show that $D_x(s)$  admits a meromorphic continuation to $\Omega:=\mathbb C\setminus\{1,2,3,\ldots\}$. We observe that we have,
$$
\sum_{k\geq1}\frac1{k^{\Re e(s)}} \int_0^1|Q(u)|e^{-2\log(x)ku}\,du \leq \|Q\|_1\sum_{k\geq1}\frac1{k^{\Re e(s)}} <\infty,
$$
hence Fubini's theorem applies and gives
\begin{equation}\label{eq:DL-polylog}
D_x(s) = -\int_0^1Q(u)\operatorname{Li}_s(e^{-2\log(x)u})\,du,
\qquad \Re e(s)>1,
\end{equation}
where we wrote as usual $\operatorname{Li}_s(q):=\sum_{k\geq1}\frac{q^k}{k^s}$, $0<q<1$, for the polylogarithm. Now, choose $0<\delta<1$ so small that $v_0:=2\log(x)\delta<2\pi$. For $s\in\Omega$ and $0\leq v\leq v_0$, define
$$
G(s,v):= \sum_{j=0}^\infty \frac{(-1)^j\zeta(s-j)}{j!}v^j.
$$
This series converges locally uniformly in $s\in\Omega$ and uniformly for $0\leq v\leq v_0$: Indeed, let $K\subset\Omega$ be compact. The functional equation of the Riemann zeta function gives
$$
\zeta(s-j) = 2^{s-j}\pi^{s-j-1} \sin\left(\frac{\pi(s-j)}2\right) \Gamma(j+1-s)\zeta(j+1-s).
$$
Now, for all sufficiently large $j$ one has uniformly for $s\in K$, $\zeta(j+1-s)\ll_K1$, while Stirling's formula gives
$$
\frac{\Gamma(j+1-s)}{\Gamma(j+1)}
\ll_K j^{C_K}
$$
for a suitable constant $C_K>0$. Hence, for sufficiently large $j$, 
$$
\left| \zeta(s-j)\frac{(-v)^j}{j!} \right| \ll_K j^{C_K}\left(\frac{v_0}{2\pi}\right)^j, \qquad s\in K,\quad 0\leq v\leq v_0.
$$
Since $v_0<2\pi$, the majorant on the right is summable. Thus, 
the series defining $G(s,v)$ converges absolutely and uniformly for $(s,v)\in K\times[0,v_0]$ -- the finitely many remaining summands cause no difficulty. Consequently, $G(\,\cdot\,,v)$ is holomorphic on $\Omega$ for every $0\leq v\leq v_0$, and $\sup_{\substack{s\in K\\0\leq v\leq v_0}}|G(s,v)|<\infty$ for every compact $K\subset\Omega$. We now use the classical Jonqui\`ere expansion
$$
\operatorname{Li}_s(e^{-v}) = \Gamma(1-s)v^{s-1}+G(s,v), \qquad s\in\Omega,\quad 0<v<2\pi,
$$
see \cite{FornbergKolbig}, (26). For $1<\Re e(s)<2$, we may insert this into \eqref{eq:DL-polylog}. Indeed, $|u^{s-1}|\leq1$ on $(0,1)$, and $Q\in L^1((0,1))$. We obtain
$$
\begin{aligned}
D_x(s) ={}&
-\Gamma(1-s)(2\log(x))^{s-1}  \int_0^\delta Q(u)u^{s-1}\,du-\int_0^\delta Q(u)G(s,2\log(x)u)\,du -\int_\delta^1Q(u)\operatorname{Li}_s(e^{-2\log(x)u})\,du.
\end{aligned}
$$
On the other hand, Lem.\ \ref{lem:MellinQ} gives, for $\Re e(s)>1$,
$$
\begin{aligned}
\int_0^\delta Q(u)u^{s-1}\,du
&=
\int_0^1Q(u)u^{s-1}\,du
-\int_\delta^1Q(u)u^{s-1}\,du\\
&=
-\frac{1}{(s-1)\zeta(s)}
-\int_\delta^1Q(u)u^{s-1}\,du.
\end{aligned}
$$
Consequently,
\begin{equation}\label{eq:DL-cont}
D_x(s) = \frac{\Gamma(1-s)(2\log(x))^{s-1}}{(s-1)\zeta(s)} + H_{x,\delta}(s), \qquad 1<\Re e(s)<2,
\end{equation}
where
$$
\begin{aligned}
H_{x,\delta}(s) :={}&
\Gamma(1-s)(2\log(x))^{s-1}  \int_\delta^1Q(u)u^{s-1}\,du-\int_0^\delta Q(u)G(s,2\log(x)u)\,du-\int_\delta^1Q(u)\operatorname{Li}_s(e^{-2\log(x)u})\,du.
\end{aligned}
$$
The function $H_{x,\delta}$ is holomorphic on $\Omega$: Indeed, the first integral is entire in $s$, and $\Gamma(1-s)$ is holomorphic on $\Omega$. For the second integral, the assertion follows from the local uniform boundedness of $G(s,2\log(x)u)$ established above and from $Q\in L^1((0,1))$. Finally, also the third integral is entire: In fact, if $K\subset\mathbb C$ is compact then, uniformly for $s\in K$ and $\delta\leq u\leq1$,
$$
|\operatorname{Li}_s(e^{-2\log(x)u})|
\leq
\sum_{k\geq1}e^{-2\log(x)\delta k}k^{-\min_{s\in K}\Re e(s)}<\infty.
$$
Thus, the right-hand side of \eqref{eq:DL-cont} is meromorphic on $\Omega$ and agrees with $D_x(s)$ on the non-empty strip $1<\Re e(s)<2$. It therefore gives the desired meromorphic continuation of $D_x(s)$ to $\Omega$.\\\\
Let now $\rho$ be a non-trivial zero of $\zeta(s)$ of multiplicity $m_\rho$. Since $0<\Re e(\rho)<1$, we have $\rho\in\Omega$, and $H_{x,\delta}$ is holomorphic at $\rho$. Moreover,
$$
\Gamma(1-\rho)\neq0,\qquad (2\log(x))^{\rho-1}\neq0,\qquad \rho-1\neq0.
$$
Hence the first term on the right-hand side of \eqref{eq:DL-cont} has a pole of order $m_\rho$ at $s=\rho$. Since the second term is holomorphic there, $D_x(s)$ has a pole of order $m_\rho$ at $s=\rho$.\\\\
We now turn to the partial sums, which we consider as a function in a real variable $T\geq1$:
$$
\Sigma R^{triv}_T(x)=\sum_{1\leq k\leq T}a_k = \sum_{1\leq k\leq T}\ERi(-2k\log(x)).
$$
Suppose, for contradiction, that $\Sigma R^{triv}_T(x)=O(T^\theta)$, for some $\theta<\Theta$. By the definition of $\Theta$, there exists a non-trivial zero $\rho$ of $\zeta(s)$ such that $\beta:=\Re e(\rho)>\theta$. For $\Re e(s)>1$, partial summation gives
$$
\sum_{k\leq N}\frac{a_k}{k^s} = \Sigma R^{triv}_N(x)N^{-s} + s\int_1^N \Sigma R^{triv}_T(x)T^{-s-1}\,dT.
$$
Since $|a_k|\leq\|Q\|_1$, we have $\Sigma R^{triv}_N(x)=O(N)$. Therefore, if $\Re e(s)>1$,
$$
\Sigma R^{triv}_N(x)N^{-s}=O(N^{1-\Re e(s)})=o(1).
$$ Hence 
\begin{equation}\label{eq:DL-partial-summation}
D_x(s)=s\int_1^\infty \Sigma R^{triv}_T(x)T^{-s-1}\,dT, \qquad \Re e(s)>1.
\end{equation}
Under the assumed bound $\Sigma R^{triv}_T(x)=O(T^\theta)$, however, the right-hand side of \eqref{eq:DL-partial-summation} converges absolutely and locally uniformly throughout the half-plane $\Re e(s)>\theta$. Indeed, if $K$ is a compact subset of this half-plane, then there is an $\varepsilon>0$ such that
$$
\Re e(s)\geq\theta+\varepsilon
\qquad \forall s\in K,
$$
and hence, uniformly for $s\in K$ and $T\geq1$, $\left|\Sigma R^{triv}_T(x)T^{-s-1}\right| \ll_K T^{-1-\varepsilon}$. Consequently,
$$
s\longmapsto s\int_1^\infty \Sigma R^{triv}_T(x)T^{-s-1}\,dT
$$
is holomorphic on $\Re e(s)>\theta$ and provides a holomorphic continuation of the Dirichlet series $D_x$ to this half-plane. On the other hand, \eqref{eq:DL-cont} gives a meromorphic continuation of $D_x$ to $\Omega=\mathbb C\setminus\{1,2,3,\ldots\}$. The two continuations agree on the non-empty open strip $1<\Re e(s)<2$. By uniqueness of meromorphic continuation, they therefore agree on the connected domain $\{s\in\mathbb C:\Re e(s)>\theta\}\setminus\{1,2,3,\ldots\}$. Since $\Re e(\rho)=\beta>\theta$ and $\rho$ is not a positive integer, the continuation furnished by \eqref{eq:DL-partial-summation} is holomorphic at $s=\rho$. This contradicts the fact proved above that the meromorphic continuation \eqref{eq:DL-cont} has a pole at $s=\rho$. Thus,
$$
\Sigma R^{triv}_T(x)\neq O(T^\theta)
$$
for every $\theta<\Theta$. Taking $\theta=0$ gives $\Sigma R^{triv}_T(x)\neq O(1)$, whence $\limsup_{T\to\infty}|\Sigma R^{triv}_T(x)|=\infty$. In particular, the sequence of partial sums is unbounded, and hence the series $\sum_{k\geq1}R(x^{-2k})$ does not converge.
\end{proof}

\end{document}